\nonstopmode \numberwithin{equation}{section}
\newtheorem{thm}{Theorem}[section]
\newtheorem{cor}[equation]{Corollary}
\newtheorem{lem}[equation]{Lemma}
\theoremstyle{definition}
\newtheorem{defn}{Definition}[section]
\newtheorem{prob}[equation]{Problem}
\newtheorem{rem}{Remark}[section]
\newtheorem{observation}{Observation}
\newcounter{minutes}\setcounter{minutes}{\time}
\newcounter{hours}\setcounter{hours}{\time}
\newcounter {own}
\def\theown {\thesection       .\arabic{own}}
\newenvironment{pf}[1][]{%
	\vskip 3mm
	\noindent
	\ifthenelse{\equal{#1}{}}%
	{{\slshape Proof. }}%
	{{\slshape #1.} }%
}%
{\qed\bigskip}
\newcounter{alphabet}
\newenvironment{Thm}[1][]{\refstepcounter{alphabet}%
	\bigskip%
	\noindent%
	{\bf Theorem \Alph{alphabet}}%
	\ifthenelse{\equal{#1}{}}{}{ (#1)}%
	{\bf .} \itshape}{\vskip 8pt}
\def\be{\begin{equation}}
	\def\ee{\end{equation}}
\newcommand{\bee}{\begin{enumerate}}
	\newcommand{\eee}{\end{enumerate}}
\newcommand{\blem}{\begin{lem}}
	\newcommand{\elem}{\end{lem}}
\newcommand{\bthm}{\begin{thm}}
	\newcommand{\ethm}{\end{thm}}
\newcommand{\bcor}{\begin{cor}}
	\newcommand{\ecor}{\end{cor}}
\newcommand{\beg}{\begin{examp}}
	\newcommand{\eeg}{\end{examp}}
\newcommand{\begs}{\begin{examples}}
	\newcommand{\eegs}{\end{examples}}
\newcommand{\bdefe}{\begin{defin}}
	\newcommand{\edefe}{\end{defin}}
\newcommand{\bprob}{\begin{prob}}
	\newcommand{\eprob}{\end{prob}}
\newcommand{\bei}{\begin{itemize}}
	\newcommand{\eei}{\end{itemize}}
\begin{document}
	
	\title{A note on the equivalence of Gromov boundary and metric boundary}
	
	\author{Vasudevarao Allu}
	\address{Vasudevarao Allu,
		Department of Mathematics,
		School of Basic Sciences,
		Indian Institute of Technology  Bhubaneswar,
		Argul, Bhubaneswar, PIN-752050, Odisha (State),  India.}
	\email{avrao@iitbbs.ac.in}

	\author{Abhishek Pandey}
	\address{Abhishek Pandey,
		Department of Mathematics,
		School of Basic Sciences,
		Indian Institute of Technology  Bhubaneswar,
		Argul, Bhubaneswar, PIN-752050, Odisha (State),  India.}
	\email{ap57@iitbbs.ac.in}
	
	\makeatletter
	\@namedef{subjclassname@2020}{\textup{2020} Mathematics Subject Classification}
	\makeatother

	\subjclass[2020]{Primary 30L10, 30L99, 30C65, 51F30, 53C23}
	\keywords{Gromov hyperbolic spaces, Gromov boundary, metric boundary, quasihyperbolically visible spaces, quasihyperbolic metric, quasihyperbolic geodesics, visibility}

	\def\thefootnote{}
	\footnotetext{ {\tiny File:~\jobname.tex,
			printed: \number\year-\number\month-\number\day,
			\thehours.\ifnum\theminutes<10{0}\fi\theminutes }
	} \makeatletter\def\thefootnote{\@arabic\c@footnote}\makeatother

	\thanks{}

	\begin{abstract}
		In this paper, we introduce the concept of quasihyperbolically visible spaces. As a tool, we study the connection between the Gromov boundary and the metric boundary.
	\end{abstract}
	
	\maketitle
	\pagestyle{myheadings}
	\markboth{Vasudevarao Allu and Abhishek Pandey}{A note on the equivalence of Gromov boundary and metric boundary}

\section{Introduction}
For a metric space $(\Omega,d)$, its metric completion and metric boundary are denoted by $\overline{\Omega}^d$ and $\partial_d\Omega=\overline{\Omega}^d\setminus \Omega$, respectively. Let $(\Omega,d)$ be a rectifiably connected, locally compact, non-complete metric space. We say that $(\Omega,d)$ is \emph{minimally nice} if the identity map from $(\Omega,d)$ to $(\Omega,\lambda_{\Omega})$ is continuous, where $\lambda_{\Omega}$ is the length metric on $\Omega$ with respect to $d$ (see Definition \ref{metric definition}). Note that every proper domain in $\mathbb{R}^n$ is minimally nice. Consider the metric space $(\Omega,k_{\Omega})$, where $k_{\Omega}$ is the quasihyperbolic metric on the metric space $(\Omega,d)$ (see Section \ref{QH metric}). If  $(\Omega, k_{\Omega})$ is Gromov hyperbolic, then its  Gromov boundary and Gromov compactification are denoted by $\partial_{G}\Omega$ and  $\overline{\Omega}^G$, respectively (see Section \ref{GHS} for the definitions).
Bonk, Heinonen, and Koskela, in their seminal work \cite{BHK} on the uniformization of Gromov hyperbolic spaces, established a two-way correspondence between uniform metric spaces and Gromov hyperbolic spaces (see \cite[Theorem 1.1]{BHK}). The following important result in \cite{BHK} says that uniform domains in $\mathbb{R}^n$ can be characterized in terms of Gromov hyperbolicity.

\begin{Thm}\cite[Theorem 1.11]{BHK}
	Let $\Omega\subset \mathbb{R}^n$ be a bounded domain. Then the following are equivalent:
	\begin{itemize}
		\item[(i)] $\Omega$ is uniform.
		\item[(ii)] $(\Omega,k_{\Omega})$ is Gromov hyperbolic and the identity map $\operatorname{id}_\Omega:(\Omega,k_{\Omega})\to(\Omega,d_{Euc})$ extends as a homeomorphism $\Phi:\overline{\Omega}^G\to \overline{\Omega}^{Euc}$ such that $\Phi$ is a quasisymmetric homeomorphism from $\partial_{G}\Omega$ onto $\partial_{Euc}\Omega$.
	\end{itemize}
\end{Thm}

Similar results were obtained in the settings of Banach spaces and metric measure spaces by V\"ais\"al\"a \cite[Theorem 2.29]{Vaisala-GH-1} and by Herron, Shanmugalingam, and Xie \cite{HSX-2008}, respectively. A central issue in this context is to determine when, and in what sense, the identity map 
$\operatorname{id}_\Omega:(\Omega,k_\Omega)\to (\Omega,d)$ 
extends to the boundary. Much of the existing literature (see \cite{BHK, Vaisala-GH-1, HSX-2008}) seeks additional regularity of such an extension, for instance quasisymmetric or quasim\"obius behavior. In contrast, in this paper we focus only on the \emph{existence} of a boundary extension with basic properties, namely continuity and surjectivity (and, under additional assumptions, that it is a homeomorphism). This leads to the following general question.
\begin{prob}\label{GB-MB}
	Let $(\Omega,d)$ be a minimally nice space. Suppose $(\Omega,k_{\Omega})$ is Gromov hyperbolic. When is the following true?
	
	\begin{quote}
		The identity map $\operatorname{id}_{\Omega}: (\Omega,k_{\Omega})\to (\Omega,d)$ extends to a continuous surjective map, or to a homeomorphism, $\widehat{\operatorname{id}_{\Omega}}:\partial_{G}\Omega\to \partial_d\Omega$.
	\end{quote}
\end{prob}

Problem \ref{GB-MB} is an interesting problem in the field of metric space analysis. The work of Lammi \cite{Lammi-2011,Lammi-Thesis} is the most general work with regard to Problem \ref{GB-MB}. Note that to have a continuous surjective, or a homeomorphic, extension, we need $\overline{\Omega}^d$ to be compact; hence $\partial_d\Omega$ must be compact, since the Gromov boundary $\partial_{G}\Omega$ is compact. But in general, the metric boundary $\partial_{d}\Omega$ need not be compact. For instance, let
$$
\Omega=(0,1)\times (0,1)\setminus \bigcup_{j=1}^\infty \left(\left\{\frac{1}{2^j}\right\}\times \left[0,\frac{1}{2}\right]\right)
$$
and equip $\Omega$ with the inner metric $\lambda_{\Omega}$ (see Definition \ref{metric definition}). Then the inner boundary is closed and bounded but not compact. Indeed, let $\{x_j\}\subset \partial_{I}\Omega$ be the sequence of midpoints
$$
x_j=\left(\frac{3}{2^{j+2}},0\right).
$$
It is easy to see that $\lambda_{\Omega}(x_{j},x_{j+1})\ge 1$ for every $j\in \mathbb{N}$, and hence this sequence does not have a convergent subsequence. The following result of Lammi \cite{Lammi-Thesis} says that, under the quasiconvexity assumption, compactness of the metric boundary is necessary and sufficient for the homeomorphic extension.

\begin{thm}\cite[Theorem 1.1]{Lammi-Thesis}\label{topological condition}
	Let $(\Omega,d)$ be a locally compact and non-complete quasiconvex space. Assume that $(\Omega, k_{\Omega})$ is Gromov hyperbolic and the Gehring--Hayman theorem holds in $(\Omega,d)$. Then the following are equivalent:
	\begin{itemize}
		\item[(i)] $\partial_{d}\Omega$ is compact.
		\item[(ii)] $\overline{\Omega}^d=\Omega\cup\partial_{d}\Omega$ is compact.
		\item[(iii)] The identity map $\operatorname{id}_{\Omega}: (\Omega,k_{\Omega})\to (\Omega,d)$ extends as a homeomorphism from $\partial_{G}\Omega$ onto $\partial_d\Omega$.
	\end{itemize}
\end{thm}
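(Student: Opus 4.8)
The plan is to prove the cycle $(ii)\Rightarrow(i)$, $(ii)\Rightarrow(iii)$, $(iii)\Rightarrow(i)$, and finally the substantial implication $(i)\Rightarrow(ii)$. Throughout I would use that, by quasiconvexity together with local compactness and completeness of $(\Omega,k_\Omega)$, the space $(\Omega,k_\Omega)$ is a proper geodesic Gromov hyperbolic space; hence $\overline{\Omega}^G$ and $\partial_G\Omega$ are compact, and any sequence escaping every $k_\Omega$-ball subconverges to a point of $\partial_G\Omega$. Write $\rho(x)=\operatorname{dist}_d(x,\partial_d\Omega)$ for the distance to the metric boundary, and recall the standard bound $k_\Omega(x,y)\ge\log(1+d(x,y)/\min\{\rho(x),\rho(y)\})$, so that $k_\Omega(o,x)\to\infty$ whenever $d(o,x)\to\infty$ or $\rho(x)\to 0$.

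The implication $(ii)\Rightarrow(i)$ is immediate: local compactness makes $\Omega$ open in $\overline{\Omega}^d$, so $\partial_d\Omega$ is closed in the compact space $\overline{\Omega}^d$, hence compact. For $(ii)\Rightarrow(iii)$ I would construct the extension explicitly. Given $\xi\in\partial_G\Omega$, choose a quasihyperbolic geodesic ray $\gamma$ from a base point $o$ with $\gamma(\infty)=\xi$. Since $\{\rho\ge\varepsilon\}$ is a compact subset of $\Omega$ by $(ii)$, and therefore $k_\Omega$-bounded, while $k_\Omega(o,\gamma(t))=t\to\infty$, the ray eventually leaves every such set, so $\rho(\gamma(t))\to 0$. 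The key point is to upgrade this to $d$-convergence: using the Gehring--Hayman inequality one controls the $d$-length of a tail $\gamma|_{[s,t]}$ by $d(\gamma(s),\gamma(t))$, and combining this with the thinness of geodesic triangles one shows that the $d$-diameters of the tails $\gamma([t,\infty))$ decay, so that $\gamma$ is $d$-Cauchy and $\gamma(t)\to\gamma(\infty)\in\partial_d\Omega$. Setting $\widehat{\operatorname{id}_\Omega}(\xi)=\lim_{t\to\infty}\gamma(t)$ defines the extension; independence of the chosen representative ray follows because asymptotic rays stay within bounded $k_\Omega$-distance, and Gehring--Hayman then keeps their $d$-images together.

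It remains to see that this map is a homeomorphism onto $\partial_d\Omega$. For surjectivity, given $p\in\partial_d\Omega$ I would pick $x_n\to p$ in $\Omega$, extract (by properness and Arzel\`a--Ascoli) a subsequential limit ray $\gamma$ of the quasihyperbolic geodesics $[o,x_n]$, and check via Gehring--Hayman that its $d$-limit is $p$. For injectivity, if two rays have the same $d$-limit, a Gehring--Hayman estimate forces their Gromov product based at $o$ to diverge, so they represent the same point of $\partial_G\Omega$. Continuity follows from stability of geodesics under the hyperbolicity hypothesis, and since $\partial_G\Omega$ is compact and $\partial_d\Omega$ is Hausdorff, a continuous bijection is automatically a homeomorphism. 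The implication $(iii)\Rightarrow(i)$ is then trivial, as $\partial_d\Omega$ is the continuous image of the compact set $\partial_G\Omega$.

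The main obstacle is $(i)\Rightarrow(ii)$, where compactness of $\overline{\Omega}^d$ must be produced from compactness of $\partial_d\Omega$ alone; this is exactly the step that rules out cusp-like ends on which $\rho$ is unbounded. I would argue by sequential compactness. Given $(x_n)\subset\Omega$, if $\liminf_n\rho(x_n)=0$ then a subsequence approaches $\partial_d\Omega$, and compactness of $\partial_d\Omega$ yields a $d$-convergent subsequence. The delicate case is $\rho(x_n)\ge\varepsilon>0$, where one must prevent escape to $d$-infinity: such a sequence satisfies $k_\Omega(o,x_n)\to\infty$ and hence subconverges to some $\xi\in\partial_G\Omega$, and the crux is to show --- using Gehring--Hayman and hyperbolicity, but \emph{without} presupposing $(ii)$ --- that the quasihyperbolic geodesics $[o,x_n]$ are uniformly $d$-short beyond a fixed $k_\Omega$-radius, so that $x_n$ in fact $d$-converges to a point of $\partial_d\Omega$ and thus $\rho(x_n)\to 0$, a contradiction. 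This forces $\sup_\Omega\rho<\infty$; together with compactness of $\partial_d\Omega$ it bounds the $d$-diameter of $\overline{\Omega}^d$, and properness of the length metric then gives compactness. Establishing this ray-convergence estimate independently of $(ii)$ is the heart of the matter, and is precisely where quasiconvexity, Gromov hyperbolicity, and Gehring--Hayman are used together.
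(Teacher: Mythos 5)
First, a point of order: the paper does not prove Theorem \ref{topological condition}; it is quoted from Lammi's thesis \cite{Lammi-Thesis} and used as a black box. So there is no internal proof to compare yours against, and what follows is an assessment of your outline on its own terms.

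Your cycle is reasonable, and (ii)$\Rightarrow$(i), (iii)$\Rightarrow$(i) and the overall plan for (ii)$\Rightarrow$(iii) can be made to work, though the step where you invoke ``thinness of geodesic triangles'' to make the tail diameters decay is vaguer than it needs to be. Under (ii) the space is $d$-bounded, so for a quasihyperbolic geodesic ray $\gamma$ based at $o$, quasiconvexity plus Gehring--Hayman give $\ell_d(\gamma|_{[0,T]})\le C_{gh}A\,d(o,\gamma(T))\le C_{gh}A\,\operatorname{diam}_d(\Omega)$ uniformly in $T$; hence $\gamma$ has finite $d$-length, is $d$-Cauchy, and lands (the limit lies in $\partial_d\Omega$ because $k_\Omega(o,\gamma(t))\to\infty$ while $k_\Omega$ is locally comparable to $d$ at interior points). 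No hyperbolicity is needed for landing. Injectivity via a geodesic line whose two ends would otherwise be joined by segments of vanishing $d$-length is the standard argument (it is exactly what this paper does in the proof of Theorem \ref{Main-1}), and compactness of $\partial_G\Omega$ plus Hausdorffness of $\partial_d\Omega$ finishes, as you say.

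The genuine gap is (i)$\Rightarrow$(ii). The claim you yourself identify as ``the crux'' --- that a sequence with $\delta_\Omega(x_n)\ge\varepsilon$ and $k_\Omega(o,x_n)\to\infty$ must $d$-converge to a point of $\partial_d\Omega$ --- is not proved, and it is \emph{false} under the hypotheses exactly as stated in the theorem. Take $\Omega=\mathbb{R}^n\setminus\{0\}$ with $n\ge2$ and the Euclidean metric: it is locally compact, non-complete and quasiconvex; $(\Omega,k_\Omega)$ is isometric to the cylinder $\mathbb{R}\times S^{n-1}$, hence Gromov hyperbolic; the inner metric coincides with $d$, so the Gehring--Hayman condition holds by the Balogh--Buckley characterization. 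Here $\partial_d\Omega=\{0\}$ is compact, yet $\overline{\Omega}^d=\mathbb{R}^n$ is not compact and $\partial_G\Omega$ consists of two points, so neither (ii) nor (iii) follows from (i). The statement therefore tacitly requires boundedness (which is in force everywhere else in this paper, and in Lammi's setting the quasihyperbolic growth condition enforces it); your proof of (i)$\Rightarrow$(ii) must invoke that hypothesis explicitly, at which point the ``delicate case'' is handled quite differently (e.g.\ via the finite-$d$-length estimate above applied to the geodesics $[o,x_n]$), not by the unproven uniform-shortness claim. A second, smaller defect in the same step: your closing appeal to ``properness of the length metric'' is not available --- only local compactness of $(\Omega,d)$ is assumed, and a closed $d$-bounded subset of a locally compact length space need not be compact; the properness you may legitimately use is that of $(\Omega,k_\Omega)$.
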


Further, the following result of Lammi \cite{Lammi-2011} gives an analytic condition for the homeomorphic extension.

\begin{thm}\cite[Theorem 1.1]{Lammi-2011}\label{analytic condition}
	Let $(\Omega,d)$ be a locally compact and non-complete quasiconvex space. Assume that $(\Omega, k_{\Omega})$ is Gromov hyperbolic and the Gehring--Hayman theorem holds in $(\Omega,d)$.
	Let $x_0\in \Omega$ be a base point and suppose that the quasihyperbolic metric satisfies
	$$
	k_{\Omega}(x_0,x)\le \phi\left(\frac{\delta_{\Omega}(x_0)}{\delta_{\Omega}(x)}\right),
	$$
	where $\phi:(0,\infty)\to (0,\infty)$ is an increasing function such that
	$$
	\sum_{j=1}^{\infty}\frac{1}{\phi^{-1}(j)}<\infty.
	$$
	Then the identification map $\partial_{G}\Omega\to \partial_d\Omega$ is a homeomorphism; moreover, $\partial_d\Omega$ is compact.
\end{thm}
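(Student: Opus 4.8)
The plan is to reduce the whole statement to Theorem \ref{topological condition}. All of that theorem's hypotheses --- local compactness, non-completeness, quasiconvexity, Gromov hyperbolicity of $(\Omega,k_\Omega)$, and the Gehring--Hayman theorem --- are already assumed here, so it suffices to verify its condition (ii), namely that $\overline{\Omega}^d$ is compact; the homeomorphic extension $\partial_G\Omega\to\partial_d\Omega$ and the compactness of $\partial_d\Omega$ will then follow at once from the implication (ii)$\Rightarrow$(iii). Thus the entire task collapses to showing that the analytic bound on $k_\Omega$ forces $(\Omega,d)$ to be totally bounded, its completion $\overline{\Omega}^d$ being automatically complete.

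The decisive step will be a length estimate along quasihyperbolic geodesics. I would first invoke the standard fact that, since $(\Omega,d)$ is locally compact and non-complete, $(\Omega,k_\Omega)$ is a complete, proper, geodesic metric space; in particular every closed quasihyperbolic ball $\overline{B}_k(x_0,N)$ is $k$-compact, and $x_0$ is joined to any $x$ by a $k$-geodesic $\gamma$. Parametrizing $\gamma$ by $k$-arclength $t$, the definition of the quasihyperbolic metric yields $ds_d=\delta_\Omega(\gamma(t))\,dt$, so the $d$-length of a tail of $\gamma$ equals $\int\delta_\Omega(\gamma(t))\,dt$. Now the hypothesis $k_\Omega(x_0,x)\le\phi\bigl(\delta_\Omega(x_0)/\delta_\Omega(x)\bigr)$, combined with the monotonicity of $\phi$, rearranges to $\delta_\Omega(\gamma(t))\le\delta_\Omega(x_0)/\phi^{-1}(t)$. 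Hence for $L=k_\Omega(x_0,x)>N$,
\begin{align*}
\ell_d\bigl(\gamma|_{[N,L]}\bigr)=\int_N^L\delta_\Omega(\gamma(t))\,dt
&\le\delta_\Omega(x_0)\int_N^\infty\frac{dt}{\phi^{-1}(t)}\\
&\le\delta_\Omega(x_0)\sum_{j=N}^\infty\frac{1}{\phi^{-1}(j)}=:\varepsilon(N),
\end{align*}
where the last inequality uses that $1/\phi^{-1}$ is decreasing, and $\varepsilon(N)\to0$ as $N\to\infty$ precisely because of the summability assumption. In particular $d\bigl(x,\gamma(N)\bigr)\le\varepsilon(N)$, so every point of $\Omega$ lying $k$-far from $x_0$ is $d$-close to the compact ball $\overline{B}_k(x_0,N)$.

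Given this estimate, total boundedness is immediate. Fixing $\eta>0$, I would choose $N$ with $\varepsilon(N)<\eta/2$; since $\overline{B}_k(x_0,N)$ is $k$-compact and $\operatorname{id}_\Omega:(\Omega,k_\Omega)\to(\Omega,d)$ is continuous, its $d$-image is $d$-compact and hence covered by finitely many balls $B_d(z_i,\eta/2)$. Any $x$ with $k_\Omega(x_0,x)\le N$ already lies in some $B_d(z_i,\eta/2)$, while any $x$ with $k_\Omega(x_0,x)>N$ satisfies $d\bigl(x,\gamma(N)\bigr)<\eta/2$ with $\gamma(N)\in\overline{B}_k(x_0,N)$, so $x\in B_d(z_i,\eta)$ for some $i$ by the triangle inequality. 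Thus finitely many $d$-balls of radius $\eta$ cover $\Omega$, so $(\Omega,d)$ is totally bounded and $\overline{\Omega}^d$ is compact. Theorem \ref{topological condition} then delivers the homeomorphic extension, and $\partial_d\Omega$, being $d$-closed in the compact space $\overline{\Omega}^d$, is compact.

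The step I expect to demand the most care is the structural fact that $(\Omega,k_\Omega)$ is proper and geodesic, which guarantees both the existence of the geodesic $\gamma$ and the compactness of $\overline{B}_k(x_0,N)$; this rests on a Hopf--Rinow-type argument exploiting local compactness together with the blow-up $k_\Omega\to\infty$ at the boundary, and it is where quasiconvexity is implicitly used to identify the $k$-topology with the $d$-topology and to relate the $d$-length element to the quasihyperbolic integrand. The remaining analytic content is genuinely elementary: the summability condition is engineered exactly so that geodesic tails beyond $k$-radius $N$ have $d$-length tending to $0$, which is what concentrates the whole space, up to arbitrarily small $d$-error, onto the single compact ball $\overline{B}_k(x_0,N)$.
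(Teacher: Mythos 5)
Your argument is correct, but it takes a genuinely different route from the one the paper indicates. The paper obtains Theorem \ref{analytic condition} by combining Theorem \ref{Main-2} (the growth condition \eqref{Growth}--\eqref{Int} forces quasihyperbolic visibility) with Theorem \ref{Main-1} (visibility yields the boundary extension, upgraded to a homeomorphism under quasiconvexity and Gehring--Hayman); you instead bypass visibility entirely and reduce everything to Theorem \ref{topological condition} by proving that the growth condition makes $(\Omega,d)$ totally bounded, so that $\overline{\Omega}^d$ is compact and the implication (ii)$\Rightarrow$(iii) applies. The analytic core is the same in both routes --- your tail estimate $\delta_\Omega(\gamma(t))\le \delta_\Omega(x_0)/\phi^{-1}(t)$ along a $k_\Omega$-arclength-parametrized geodesic, integrated against the summability hypothesis, is exactly the computation that drives Claim 2 in the proof of Theorem \ref{Main-2} --- but you extract a metric conclusion (small $d$-length of geodesic tails, hence total boundedness) where the paper extracts a visibility conclusion (geodesics between distinct boundary points must meet a compact set). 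Your version buys something concrete: Theorem \ref{Main-1} carries ``$\overline{\Omega}^d$ is compact'' as a standing hypothesis, so the paper's one-line claim that Theorems \ref{Main-1} and \ref{Main-2} ``give'' Theorem \ref{analytic condition} tacitly needs precisely the compactness you prove; your estimate is the missing ingredient that makes that reduction airtight. What it costs is that your proof runs through compactness first, whereas (as the paper itself remarks) Lammi's original argument, and the paper's visibility-based one, obtain the homeomorphic extension directly and deduce compactness of $\partial_d\Omega$ as a corollary. Two small points to tidy up: $\phi$ is only assumed increasing, so $\phi^{-1}$ should be read as a generalized inverse (this affects the statement itself, not just your proof); and the hypotheses of Theorem \ref{analytic condition} do imply that $(\Omega,d)$ is minimally nice (quasiconvexity gives continuity of $\operatorname{id}:(\Omega,d)\to(\Omega,\lambda_\Omega)$), which is what legitimizes your appeal to properness and geodesicity of $(\Omega,k_\Omega)$ via \cite[Proposition 2.8]{BHK}.
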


Throughout, for $z\in\Omega$ we write
	\[
	\delta_\Omega(z):=d(z,\partial_d\Omega)=\inf\{d(z,w):w\in\partial_d\Omega\}.
	\]
It is important to mention that Lammi's proof of Theorem \ref{analytic condition} does not use a compactness argument. Indeed, the compactness of $\partial_d\Omega$ comes as a consequence of the homeomorphic extension.

\begin{rem}[\textbf{Equivalence of Gromov boundary and inner boundary}]
	Let $\Omega$ be a bounded domain in $\mathbb{R}^n$. Equip it with the inner metric $\lambda_{\Omega}$, and let $\partial_I\Omega$ be the inner boundary, i.e.\ the metric boundary of the metric space $(\Omega, \lambda_{\Omega})$. Assume that $(\Omega,k_{\Omega})$ is Gromov hyperbolic. Consider the following problem.
	\begin{prob}\label{GB-IB}
		For which bounded domains $\Omega\subset\mathbb{R}^n$ does the identity map $\operatorname{id}_\Omega:(\Omega,k_{\Omega})\to(\Omega, \lambda_\Omega)$ extend to a homeomorphism from $\partial_{G}\Omega$ onto $\partial_I\Omega$?
	\end{prob}
	Lammi's work \cite{Lammi-2011,Lammi-Thesis} is very helpful for Problem \ref{GB-IB}. This is because, as a length space, $(\Omega,\lambda_{\Omega})$ is a quasiconvex space, and hence Theorem \ref{topological condition} tells us that the compactness of the inner boundary is sufficient for the homeomorphic extension. Moreover, Theorem \ref{analytic condition} gives us an analytic condition for the inner boundary to be compact.
\end{rem}

The growth condition in Theorem \ref{analytic condition} implies that $(\Omega,d)$ is bounded. Recently, Zhou \textit{et al.} \cite[Theorem 1.8]{Zhou-2022} studied this problem for unbounded Gromov hyperbolic spaces. Further, Zhou and Rasila \cite{Rasila-2021} studied Problem \ref{GB-MB} in the case where $\Omega$ is a generalized John domain in $\mathbb{R}^n$ and $d$ is the inner metric.

The main aim of this work is to study Problem \ref{GB-MB}. This study extends our previous work \cite{Allu-2023}, in which we examined the Euclidean variant of Problem \ref{GB-MB}, namely $\Omega$ as a bounded domain in $\mathbb{R}^n$ and $d$ as the Euclidean metric. In that setting, we introduced the concept of QH-visibility domains and provided a complete solution (see \cite[Theorems 2.2 and 2.3]{Allu-2023}). Motivated by this, we study Problem \ref{GB-MB} via the visibility property and introduce the concept of quasihyperbolically visible spaces (see Definition \ref{qh visible} below). Our approach is motivated by the recent work of Bharali and Zimmer \cite{Bharali-2017} and of Bracci, Nikolov, and Thomas \cite{Bracci-2022}, where they studied a suitable form of a notion called visibility. The concept of visibility goes back to Eberlein and O'Neill \cite{O'Neill-1973}, who introduced a general construction of compactification of Hadamard manifolds by attaching the boundary at infinity with the cone topology. Using the notion of boundary at infinity, Eberlein and O'Neill defined the term visibility manifold for a complete Riemannian manifold with nonpositive sectional curvature. Bharali and Zimmer \cite{Bharali-2017} introduced Goldilocks domains in $\mathbb{C}^n$, which satisfy a visibility condition for the Kobayashi distance. This led Bharali and Maitra \cite{Bharali-2021} to the notion of visibility domains in the Kobayashi setting. Moreover, our use of quasihyperbolic visibility to control the cluster set of a quasihyperbolic geodesic ray at infinity is inspired by the method of Bharali--Maitra \cite{Bharali-2021}, where cluster sets of dynamical orbits are controlled via visibility property.

\begin{defn}\label{qh visible}
	Let $(\Omega,d)$ be a minimally nice space. We say that
	\begin{itemize}
		\item[(1)] $(\Omega,k_{\Omega})$ has the {\it visibility property} if the following holds:
		\begin{itemize}
			\item[($*$)] For every pair of distinct points $p, q\in \partial_d\Omega$, there exists a compact set $K\subset\Omega$ such that for any sequences $(x_n), (y_n)$ in $\Omega$ with $x_n\stackrel{d}{\to} p$ and $y_n\stackrel{d}{\to} q$, every quasihyperbolic geodesic $\gamma_{n}$ joining $x_n$ and $y_n$ intersects $K$.
		\end{itemize}
		\item[(2)] $(\Omega,d)$ satisfies the {\it quasihyperbolic visibility property} if $(\Omega,k_{\Omega})$ has the visibility property.
		\item[(3)] $(\Omega,d)$ is {\it quasihyperbolically visible} if $(\Omega,d)$ has the quasihyperbolic visibility property.
	\end{itemize}
\end{defn}

\section{Results}
We now state our main results. Throughout, we assume that $\overline{\Omega}^d$ is compact.

\begin{thm}\label{Main-1}
	Let $(\Omega,d)$ be a bounded minimally nice space. Assume that $\overline{\Omega}^d$ is compact and $(\Omega,k_{\Omega})$ is Gromov hyperbolic. Then the following are equivalent:
	\begin{itemize}
		\item[(i)] $(\Omega,d)$ is quasihyperbolically visible,
		\item[(ii)] The identity map $\operatorname{id}_{\Omega}$ extends to a continuous surjective map $\Phi:\partial_{G}\Omega\to \partial_d\Omega$.
	\end{itemize}
	In addition, if $(\Omega,d)$ is quasiconvex and the Gehring--Hayman theorem holds in $(\Omega,d)$, then the extension map $\Phi:\partial_{G}\Omega\to \partial_d\Omega$ is a homeomorphism.
\end{thm}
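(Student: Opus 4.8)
The plan is to first translate the visibility property into a statement purely about Gromov products, and then build the boundary map from it. Since $(\Omega,d)$ is locally compact and the length space $(\Omega,\lambda_\Omega)$ is involved, the quasihyperbolic space $(\Omega,k_\Omega)$ is proper and geodesic (Hopf--Rinow), so closed $k_\Omega$-balls are compact; moreover, since $\overline{\Omega}^d$ is compact and $\delta_\Omega$ is continuous, each set $\{\delta_\Omega\ge c\}$ is a compact subset of $\Omega$. Hence a sequence with $k_\Omega(x_0,x_n)\to\infty$ must satisfy $\delta_\Omega(x_n)\to0$, so any $d$-subsequential limit of such a sequence lies in $\partial_d\Omega$. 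A compact $K\subset\Omega$ meets $\gamma_n=[x_n,y_n]$ exactly when the $k_\Omega$-distance from $x_0$ to $\gamma_n$ stays bounded, and by $\delta$-hyperbolicity this distance equals $(x_n\mid y_n)_{x_0}+O(\delta)$. Exhausting $\Omega$ by compact sets and diagonalizing, I would record the reformulation that \emph{visibility fails at a pair $p\neq q$ precisely when there exist sequences $x_n\stackrel{d}{\to}p$ and $y_n\stackrel{d}{\to}q$ with $(x_n\mid y_n)_{x_0}\to\infty$}.

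For (i)$\Rightarrow$(ii) I would define $\Phi$ as follows: given $\xi\in\partial_G\Omega$, choose $x_n\to\xi$ in the Gromov topology, pass to a $d$-convergent subsequence, and let $\Phi(\xi)$ be its limit, which lies in $\partial_d\Omega$ by the first paragraph. Well-definedness is exactly where visibility enters: if two sequences tending to $\xi$ had distinct $d$-limits $p\neq q$, then interlacing gives $(x_n\mid y_n)_{x_0}\to\infty$, contradicting visibility at $(p,q)$ via the reformulation. Surjectivity is immediate, since for $p\in\partial_d\Omega$ one picks $x_n\stackrel{d}{\to}p$, notes $k_\Omega(x_0,x_n)\to\infty$, extracts a Gromov limit $\xi\in\partial_G\Omega$ by compactness of $\overline{\Omega}^G$, and obtains $\Phi(\xi)=p$. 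Continuity is the technical heart: to show $\xi_n\to\xi$ forces $\Phi(\xi_n)\to\Phi(\xi)$, I would argue by contradiction, extracting $d$-limits $\Phi(\xi_{n_k})\to q\neq\Phi(\xi)=:p$, choosing interior points $z_k$ far along rays toward $\xi_{n_k}$ with $z_k\stackrel{d}{\to}q$ and points $w_k\to\xi$ with $w_k\stackrel{d}{\to}p$, and using the hyperbolic inequality for the Gromov product together with $\xi_{n_k}\to\xi$ to conclude $(z_k\mid w_k)_{x_0}\to\infty$, which once more contradicts visibility at $(p,q)$.

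For (ii)$\Rightarrow$(i), I would argue contrapositively: if visibility fails at $p\neq q$, take $x_n\stackrel{d}{\to}p$, $y_n\stackrel{d}{\to}q$ with $(x_n\mid y_n)_{x_0}\to\infty$; passing to subsequences gives $x_n\to\xi$ and $y_n\to\eta$ in $\overline{\Omega}^G$, and $(x_n\mid y_n)_{x_0}\to\infty$ forces $\xi=\eta$, whence continuity of the extension yields $p=\Phi(\xi)=\Phi(\eta)=q$, a contradiction. For the final homeomorphism statement, since $\partial_G\Omega$ and $\partial_d\Omega$ are compact Hausdorff and $\Phi$ is a continuous surjection, it suffices to prove injectivity. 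Suppose $\Phi(\xi)=\Phi(\eta)=:p$ with $\xi\neq\eta$, and let $\alpha,\beta$ be quasihyperbolic rays from $x_0$ to $\xi,\eta$, so $\alpha(t),\beta(t)\stackrel{d}{\to}p$. Because $\xi\neq\eta$, the product $(\alpha(t)\mid\beta(t))_{x_0}$ stays bounded, so the point $w_t$ on $[\alpha(t),\beta(t)]$ nearest $x_0$ satisfies $k_\Omega(x_0,w_t)\le R$, keeping $w_t$ in a fixed compact subset of $\Omega$ where $\delta_\Omega(w_t)\ge c>0$. On the other hand, quasiconvexity gives $\lambda_\Omega(\alpha(t),\beta(t))\le C\,d(\alpha(t),\beta(t))\to0$, and the Gehring--Hayman theorem bounds the $d$-length of $[\alpha(t),\beta(t)]$ by a constant multiple of $\lambda_\Omega(\alpha(t),\beta(t))$, so its $d$-diameter tends to $0$; combined with $\alpha(t)\stackrel{d}{\to}p$ this forces $w_t\stackrel{d}{\to}p$, hence $\delta_\Omega(w_t)\to0$, contradicting $\delta_\Omega(w_t)\ge c$. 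Thus $\Phi$ is injective and therefore a homeomorphism.

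The step I expect to be the main obstacle is the continuity of $\Phi$ in (i)$\Rightarrow$(ii): it requires a careful diagonal construction together with the correctly boundary-extended form of the hyperbolic Gromov-product inequality, and one must ensure that the quantifiers in the reformulation of visibility align with the sequences produced by that construction.
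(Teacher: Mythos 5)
Your proposal is correct in outline, but it takes a genuinely different route from the paper. The paper never mentions Gromov products: it defines $\Phi$ by proving that every quasihyperbolic geodesic ray \emph{lands} at a point of $\partial_d\Omega$ (Lemmas \ref{AV-P7-lem1}--\ref{AV-P7-lem2}), that equivalent rays land at the same point (Lemma \ref{AV-P7-lem3}), and it obtains continuity by tracking uniform-on-compacta limits of generalized rays against the ray-based definition of the Gromov topology (Lemma \ref{continuous}); the implication (ii)$\Rightarrow$(i) is deferred to a citation, and injectivity is proved by taking a bi-infinite geodesic line from $\xi$ to $\eta$ and showing via quasiconvexity plus Gehring--Hayman that its $d$-length would vanish if both ends landed at the same point. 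You instead reformulate visibility as the non-existence of sequences $x_n\stackrel{d}{\to}p$, $y_n\stackrel{d}{\to}q$ with $(x_n\mid y_n)_{x_0}\to\infty$ (using properness of $(\Omega,k_\Omega)$ to identify compact subsets of $\Omega$ with $k_\Omega$-bounded ones, and thin triangles to replace $\operatorname{dist}_{k_\Omega}(x_0,[x_n,y_n])$ by the Gromov product), and then run well-definedness, continuity, surjectivity, \emph{and} a self-contained proof of (ii)$\Rightarrow$(i) through Gromov-product estimates; your injectivity argument replaces the geodesic line by the segments $[\alpha(t),\beta(t)]$ and their nearest points to $x_0$, reaching the same contradiction ($\delta_\Omega(w_t)$ bounded below yet tending to $0$) from the same two hypotheses. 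What your route buys is self-containedness and shorter landing/continuity arguments; what it costs is reliance on the sequential/Gromov-product description of $\partial_G\Omega$ and its topology, whereas the paper defines the Gromov boundary via rays and uniform convergence, so you are implicitly invoking the standard equivalence of the two descriptions (valid here because $(\Omega,k_\Omega)$ is proper and geodesic) and the boundary-extended form of the hyperbolic inequality. Two small points to watch when writing this up: the definition of visibility must be read in the ``for all sufficiently large $n$'' sense (as the paper itself does in Lemma \ref{AV-P7-lem1.1}) for your diagonalization to produce the reformulation, and in (ii)$\Rightarrow$(i) you need the extension to be continuous on all of $\overline{\Omega}^G$, not merely on $\partial_G\Omega$, which is the intended meaning of ``extends'' in statement (ii).
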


\begin{rem}
	By saying that the Gehring--Hayman theorem holds in $(\Omega,d)$, we mean that there exists a constant $C_{gh} \geq 1$ such that for each pair of points $x,y \in \Omega$ and for each quasihyperbolic geodesic $\gamma$ joining $x$ and $y$, it holds that
	$$\ell_d(\gamma[x,y]) \leq C_{gh}\,\ell_d(\sigma),$$
	where $\sigma$ is any other curve joining $x$ and $y$ in $\Omega$. In other words, quasihyperbolic geodesics are essentially the shortest curves in $\Omega$. We refer to \cite{Koskela-2013,Koskela-2014} for conditions ensuring that $(\Omega,d)$ satisfies the Gehring--Hayman theorem.
\end{rem}

Our next result gives criteria for a minimally nice space to be a quasihyperbolically visible space.

\begin{thm}\label{Main-2}
	Let $(\Omega,d)$ be a bounded minimally nice space. Fix a base point $x_0\in \Omega$ and let $\phi:(0,\infty)\to(0,\infty)$ be a strictly increasing function with $\phi(t)\to \infty$ as $t\to \infty$ such that
	\begin{equation}\label{Int}
		\int_{0}^{\infty}\frac{dt}{\phi^{-1}(t)}<\infty.
	\end{equation}
	If the quasihyperbolic metric satisfies
	\begin{equation}\label{Growth}
		k_{\Omega}(x_0,x)\le \phi\left(\frac{\delta_{\Omega}(x_0)}{\delta_{\Omega}(x)}\right),
	\end{equation}
	then $(\Omega,d)$ is quasihyperbolically visible.
\end{thm}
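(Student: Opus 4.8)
The plan is to verify the visibility property ($*$) directly by producing, for each pair of distinct points $p,q\in\partial_d\Omega$, the explicit compact set
\[
K_\epsilon:=\{x\in\Omega:\delta_\Omega(x)\ge\epsilon\},
\]
which is compact because $\overline{\Omega}^d$ is compact and $\delta_\Omega$ extends continuously to $\overline{\Omega}^d$ with $\delta_\Omega\equiv 0$ on $\partial_d\Omega$. Writing $\rho:=d(p,q)>0$, the whole argument reduces to the following quantitative claim: there is $\epsilon>0$ (depending only on $\rho$ and $x_0$) such that every quasihyperbolic geodesic $\gamma$ that avoids $K_\epsilon$ satisfies $\ell_d(\gamma)<\rho/2$. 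Granting this, if $x_n\stackrel{d}{\to}p$ and $y_n\stackrel{d}{\to}q$ then $d(x_n,y_n)\to\rho$, so $d(x_n,y_n)>\rho/2$ for all large $n$; since any geodesic $\gamma_n$ joining $x_n,y_n$ has $\ell_d(\gamma_n)\ge d(x_n,y_n)>\rho/2$, it cannot avoid $K_\epsilon$, i.e.\ $\gamma_n\cap K_\epsilon\neq\emptyset$. This is exactly ($*$), with the usual understanding that the conclusion concerns the tail of the sequences.

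For the quantitative claim I would parametrize a quasihyperbolic geodesic $\gamma\colon[0,L]\to\Omega$ by quasihyperbolic arc length, so that $L=k_\Omega(x,y)$ and the $d$-length element is $\delta_\Omega(\gamma(s))\,ds$. Inverting the growth hypothesis \eqref{Growth} using that $\phi$ is strictly increasing gives the pointwise bound
\[
\delta_\Omega(\gamma(s))\le\frac{\delta_\Omega(x_0)}{\phi^{-1}\bigl(h(s)\bigr)},\qquad h(s):=k_\Omega(x_0,\gamma(s)),
\]
valid wherever $h(s)$ lies in the range of $\phi$. Two lower bounds for $h$ drive the estimate. First, letting $s_0$ be a point where the continuous function $h$ attains its minimum $m_0$ on $[0,L]$, the geodesic identity $k_\Omega(\gamma(s_0),\gamma(s))=|s-s_0|$ and the triangle inequality give $h(s)\ge|s-s_0|-m_0$, which together with $h(s)\ge m_0$ yields the convexity-type bound $h(s)\ge\tfrac12|s-s_0|$. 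Second, if $\gamma$ avoids $K_\epsilon$ then $\delta_\Omega(\gamma(s))<\epsilon$ for all $s$, so the standard quasihyperbolic lower bound $k_\Omega(x_0,z)\ge\log\bigl(\delta_\Omega(x_0)/\delta_\Omega(z)\bigr)$ forces $h(s)\ge R_\epsilon:=\log(\delta_\Omega(x_0)/\epsilon)$, and $R_\epsilon\to\infty$ as $\epsilon\to0^+$.

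Combining these, for a geodesic avoiding $K_\epsilon$ we have $h(s)\ge\max\{R_\epsilon,\tfrac12|s-s_0|\}$, hence
\[
\ell_d(\gamma)\le\delta_\Omega(x_0)\int_0^L\frac{ds}{\phi^{-1}\!\bigl(\max\{R_\epsilon,\tfrac12|s-s_0|\}\bigr)}.
\]
Splitting the integral at $|s-s_0|=2R_\epsilon$, estimating the near part by its length $\le 4R_\epsilon$ times $1/\phi^{-1}(R_\epsilon)$, and treating the far part by the substitution $u=|s-s_0|/2$, one bounds the right-hand side by
\[
\delta_\Omega(x_0)\Bigl(\tfrac{4R_\epsilon}{\phi^{-1}(R_\epsilon)}+4\!\int_{R_\epsilon}^{\infty}\frac{du}{\phi^{-1}(u)}\Bigr)=:\delta_\Omega(x_0)\,g(\epsilon).
\]
The tail integral tends to $0$ by hypothesis \eqref{Int}; and the same hypothesis, via $\int_{R/2}^{R}\frac{dt}{\phi^{-1}(t)}\ge \frac{R/2}{\phi^{-1}(R)}$, forces $R/\phi^{-1}(R)\to0$, so $g(\epsilon)\to0$ as $\epsilon\to0^+$. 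Choosing $\epsilon$ so small that $\delta_\Omega(x_0)\,g(\epsilon)<\rho/2$ then establishes the quantitative claim.

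The main obstacle is precisely the key estimate of the last two paragraphs: one must extract genuine smallness (not mere boundedness) of $\ell_d(\gamma)$ for geodesics that hug the boundary, and this is where the two-sided control of $h(s)$—large everywhere because $\delta_\Omega<\epsilon$, and growing linearly away from its minimum—must be married to the integral condition \eqref{Int}. Minor technical points to check carefully are the existence and $d$-rectifiability of quasihyperbolic geodesics (which follows from local compactness of $(\Omega,d)$, guaranteeing that $(\Omega,k_\Omega)$ is a proper geodesic space), and the fact that $\max\{R_\epsilon,\tfrac12|s-s_0|\}$ stays within the range of $\phi$ for small $\epsilon$, so that the inverted growth bound is legitimately applied along the whole geodesic.
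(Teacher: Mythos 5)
Your proof is correct, and it reaches the conclusion by a genuinely different route than the paper, even though the analytic heart is the same. The paper argues by contradiction: it negates visibility to obtain geodesics $\gamma_n$ with $\max_t\delta_\Omega(\gamma_n(t))\to0$, reparametrizes by quasihyperbolic arclength so that $\delta_\Omega\circ g_n$ is maximized at $t=0$, extracts a locally uniform limit curve $g$ via Arzel\`a--Ascoli, and derives a contradiction by showing $g$ is simultaneously constant (since $\delta_\Omega\circ g_n\to0$ uniformly) and non-constant (via the estimate $|t|\le 2\phi\bigl(\delta_\Omega(x_0)/\delta_\Omega(g_n(t))\bigr)$, whose inversion and integration against \eqref{Int} shows the tails of $g_n$ carry little $d$-length). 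Your argument isolates exactly that last estimate --- your bound $h(s)\ge\tfrac12|s-s_0|$ obtained from the minimum of $h$ is the mirror image of the paper's factor-of-two bound obtained from the maximum of $\delta_\Omega\circ g_n$ --- but packages it directly and quantitatively: you exhibit the explicit compact set $K_\epsilon=\{\delta_\Omega\ge\epsilon\}$ and show that every quasihyperbolic geodesic avoiding it has $d$-length at most $\delta_\Omega(x_0)\,g(\epsilon)\to0$, which is incompatible with joining points near $p$ to points near $q$. This buys an effective statement ($K$ depends only on $d(p,q)$ and $x_0$) and avoids the normal-family machinery altogether; the observation that \eqref{Int} forces $R/\phi^{-1}(R)\to0$, which your near-part estimate genuinely needs, is an ingredient absent from the paper's proof and is correctly justified. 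The only caveats, both shared with the paper's own argument, are the interpretation of the visibility condition as applying to tails of the sequences (which is how the paper itself uses it in Lemma \ref{AV-P7-lem1.1}) and the mild bookkeeping required when $h(s)$ falls outside the range of a possibly discontinuous $\phi$; neither affects the validity of your proof.
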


\begin{rem}
	It is easy to see that Theorem \ref{Main-1} together with Theorem \ref{Main-2} gives Theorem \ref{analytic condition}. Furthermore, Theorem \ref{Main-1} and Theorem \ref{Main-2} together imply a special case of \cite[Theorem 1.8(1)]{Zhou-2022}, which we explain below.
	\begin{pf}[\textbf{Explanation}]
		Let $(\Omega,d)$ be a rectifiably connected, locally compact, non-complete bounded metric space. It is known that if $(\Omega,d)$ is bounded and $\varphi$-uniform, then there is an increasing function $\phi:[0,\infty)\to [0,\infty)$ such that for all $x\in \Omega$
		$$k_{\Omega}(x_0,x)\le \phi\left(\frac{\delta_{\Omega}(x_0)}{\delta_{\Omega}(x)}\right),$$
		where $x_0\in \Omega$ satisfies $\delta_{\Omega}(x_0)=\max_{x\in \Omega}\delta_{\Omega}(x)$. In particular, we can take
		$$\phi(t)=\varphi\left(\frac{\mbox{diam}(\Omega)\,t}{\delta_{\Omega}(x_0)}\right).$$
		Note that the conditions
		\begin{equation}\label{phi-uniform}
			\int_{0}^{\infty}\frac{dt}{\phi^{-1}(t)}<\infty \,\,\, \mbox{ and }\,\,\, \int_{0}^{\infty}\frac{dt}{\varphi^{-1}(t)}<\infty
		\end{equation}
		are mutually equivalent. Hence, by Theorem \ref{Main-2}, a bounded $\varphi$-uniform domain satisfying \eqref{phi-uniform} is quasihyperbolically visible. This, together with Theorem \ref{Main-1} and \cite[Theorem 5.1]{Koskela-2014}, gives Theorem 1.8(1) of \cite{Zhou-2022}.
	\end{pf}
\end{rem}
	
\section{Preliminaries}

\subsection{Metric geometry}
Let $(\Omega,d)$ be a metric space. A curve is a continuous function $\gamma:[a,b]\rightarrow \Omega$. Let $\mathcal{P}$ denote the set of all partitions $a = t_{0}<t_{1}<t_{2}< \cdots<t_{n}=b$ of the interval $[a,b]$. The length of the curve $\gamma$ in the metric space $(\Omega,d)$ is
$$\ell_{d}(\gamma) = \sup_{\mathcal{P}} \sum_{k=0}^{n-1}d(\gamma(t_{k}), \gamma(t_{k+1})).$$
A curve is said to be rectifiable if $\ell_d(\gamma) < \infty$. A metric space $(\Omega,d)$ is said to be rectifiably connected if every pair of points $x,y \in \Omega$ can be joined by a rectifiable curve. For a rectifiable curve $\gamma$, we define the arc-length function $s:[a,b]\rightarrow \left[ 0,\ell_d(\gamma) \right]$ by $s(t) = \ell_{d}(\gamma|_{[a,t]})$. The arc-length function is of bounded variation. For any rectifiable curve $\gamma:[a,b]\to \Omega$, there is a unique map $\gamma_s:[0,\ell_d(\gamma)]\to \Omega$ such that $\gamma=\gamma_s\circ s$, and the curve $\gamma_s$ is called the arclength parametrization of $\gamma$.

\begin{defn}
	A curve $\gamma:[a,b]\to (\Omega,d)$ is said to be a geodesic if it is an {\it isometry}, {\it i.e.}\ for all $t,t'\in [a,b]$,
	$d(\gamma(t), \gamma(t'))=|t-t'|$.
	An isometry $\gamma:[0,\infty)\to (\Omega,d)$ is called a {\it geodesic ray}, and an isometry $\gamma:(-\infty,\infty)\to (\Omega,d)$ is called a {\it geodesic line}.
\end{defn}

\begin{defn}\label{metric definition}
	A metric space $(\Omega,d)$ is said to be
	\begin{itemize}
		\item[(1)] {\it a geodesic space} if every pair of points $x,y \in \Omega$ can be joined by a geodesic;
		\item[(2)] {\it proper} if every closed ball is compact in $\Omega$;
		\item[(3)] {\it intrinsic} (or a {\it path metric space}, or a {\it length space}) if for all $x,y\in \Omega$ it holds that
		$$d(x,y)=\lambda_\Omega(x,y):=\inf_{\gamma} \ell_d(\gamma),$$
		where the infimum is taken over all rectifiable curves $\gamma$ in $\Omega$ joining $x$ and $y$. The metric $\lambda_\Omega$ is known as the inner length metric associated to $(\Omega,d)$;
		\item[(4)] {\it quasiconvex} if there exists a constant $A\ge 1$ such that every pair of points $x,y\in \Omega$ can be joined by a curve $\gamma$ with $\ell_d(\gamma)\le A\,d(x,y)$.
	\end{itemize}
\end{defn}

\subsection{Gromov hyperbolic space}\label{GHS}
Let $(\Omega,d)$ be a geodesic metric space. A geodesic triangle is the union of geodesics $\gamma_i:[a_i,b_i]\to \Omega$, $i=1,2,3$, such that $a_i<b_i$ for each $i$ and
$\gamma_1(b_1)=\gamma_2(a_2)$, $\gamma_2(b_2)=\gamma_3(a_3)$, and $\gamma_3(b_3)=\gamma_1(a_1)$.
The geodesics $\gamma_1,\gamma_2,$ and $\gamma_3$ are called the sides of the geodesic triangle.

\begin{defn}
	Let $\delta\ge 0$. A geodesic metric space $(\Omega,d)$ is said to be $\delta$-hyperbolic if every geodesic triangle is $\delta$-thin, i.e.\ each side of the geodesic triangle is contained in the $\delta$-neighborhood of the union of the other two sides. A geodesic metric space $(\Omega,d)$ is said to be Gromov hyperbolic if it is $\delta$-hyperbolic for some $\delta\ge 0$.
\end{defn}

Let $(\Omega,d)$ be a proper geodesic Gromov hyperbolic space and let $p\in \Omega$. Let $\mathcal{G}_{p}$ be the space of all geodesic rays $\gamma:[0,\infty)\to \Omega$ with $\gamma(0)=p$, endowed with the topology of uniform convergence on compact subsets of $[0,\infty)$. Define an equivalence relation $\sim$ on $\mathcal{G}_{p}$ as follows: for $\gamma,\sigma\in \mathcal{G}_{p}$,
$$\gamma\sim \sigma \iff \sup_{t\ge 0} d(\gamma(t),\sigma(t))< \infty.$$

\begin{defn}
	\begin{itemize}
		\item[(1)] The {\it Gromov boundary} of $\Omega$ is denoted by $\partial_{G} \Omega$ and is defined as the quotient space $\mathcal{G}_{p}/\sim$ endowed with the quotient topology.
		\item[(2)] The {\it Gromov closure} of $\Omega$ is denoted by $\overline{\Omega}^{G}$ and is defined as $\overline{\Omega}^{G}=\Omega\cup \partial_{G}\Omega$.
	\end{itemize}
\end{defn}

The set $\overline{\Omega}^{G}$ admits a natural topology, and with this topology $\overline{\Omega}^{G}$ is a compactification of $\Omega$, known as the Gromov compactification of $\Omega$ (see, for instance, \cite[Chapter III.H.3, Proposition 3.7]{Bridson-book}). To describe this topology, we need some additional notation. A generalized ray is either a geodesic segment or a geodesic ray.
\begin{itemize}
	\item For a geodesic ray $\gamma \in \mathcal{G}_p$, define $\operatorname{End}(\gamma):=[\gamma]$.
	\item For a geodesic segment $\gamma:[0,R]\to \Omega$ with $\gamma(0)=p$ (where $R>0$), define $\operatorname{End}(\gamma):=\gamma(R)$.
\end{itemize}

\begin{defn}[Gromov convergence]\label{Gromov topology}
	Let $(\Omega,d)$ be a proper geodesic Gromov hyperbolic space. Fix a base point $p\in \Omega$. A sequence $\{x_n\}_{n\in \mathbb{N}}$ in $\overline{\Omega}^G$ converges to $x\in \overline{\Omega}^G$ (denoted by $x_n \stackrel{Gromov}\longrightarrow x$) if and only if there exist generalized rays $\gamma_n$ with $\gamma_n(0)=p$ and $\operatorname{End}(\gamma_n)=x_n$ such that every subsequence of $(\gamma_n)$ contains a further subsequence that converges uniformly on compact subsets to a generalized ray $\gamma$ with $\gamma(0)=p$ and $\operatorname{End}(\gamma)=x$.
\end{defn}

In view of the Gromov topology of $\overline{\Omega}^G$, we have the following observations for geodesic rays and geodesic lines.
\begin{itemize}
	\item For any geodesic ray $\gamma:[0,\infty)\to \Omega$,
	$\lim_{t\to\infty}\gamma(t)=\operatorname{End}(\gamma)=[\gamma]\in \partial_{G}\Omega$.
	For any geodesic line $\gamma:(-\infty,\infty)\to \Omega$, both limits exist and
	$\lim_{t\to-\infty}\gamma(t)\ne\lim_{t\to \infty}\gamma(t)$.
	\item For a geodesic line $\gamma$, let us denote $\gamma(-\infty)=\lim_{t\to-\infty}\gamma(t)$ and $\gamma(\infty)=\lim_{t\to \infty}\gamma(t)$. Then for each $p\in \Omega$ and $\xi\in \partial_{G}\Omega$ there exists a geodesic ray $\gamma:[0,\infty)\to \Omega$ such that $\operatorname{End}(\gamma)=\xi$,
	\item and for each pair of distinct points $\xi, \eta\in \partial_{G}\Omega$ there exists a geodesic line $\gamma$ with $\gamma(-\infty)=\xi$ and $\gamma(\infty)=\eta$ (see \cite[Chapter III.H.3, Lemmas 3.1 and 3.2]{Bridson-book}). This property is called the \emph{visibility property of Gromov hyperbolic spaces}.
\end{itemize}

\subsection{The Quasihyperbolic metric}\label{QH metric}
Gehring and Palka \cite{Gehring-1976} introduced the quasihyperbolic metric for domains $\Omega\subsetneq \mathbb{R}^n$ as a tool to study quasiconformal homogeneity. The same definition naturally generalizes in the minimally nice metric space setting. 
\begin{defn}
Let $(\Omega,d)$ be a rectifiably connected, locally compact, non-complete metric space. Then the metric boundary $\partial_d\Omega$ is nonempty, and we define the {\it quasihyperbolic metric} $k_{\Omega}$ in $\Omega$ by the length element $|dz|/\delta_{\Omega}(z)$, where $\delta_\Omega(z)=d(z,\partial_d\Omega)$ and $|dz|$ denotes the length element for the metric $d$. Thus,
$k_{\Omega}(x,y)=\inf_{\gamma}\ell_{k}(\gamma)$, where the infimum is taken over all rectifiable curves $\gamma$ in $\Omega$ joining $x$ and $y$, and
$$\ell_{k}(\gamma)=\int_{\gamma}\frac{|dz|}{\delta_{\Omega}(z)}$$
is called the {\it quasihyperbolic length} of $\gamma$.
\end{defn}
In view of \cite[Proposition 2.8]{BHK}, it is well known that if $(\Omega,d)$ is minimally nice, then $(\Omega,k_{\Omega})$ is a complete, proper, and geodesic metric space.

We recall some basic estimates for the quasihyperbolic metric which were first established for domains in $\mathbb{R}^n$ (see \cite[Lemma 2.1]{Gehring-1976}) and also hold in minimally nice spaces (see \cite[Chapter 2]{BHK}). For all $x,y\in \Omega$, we have
\begin{equation}\label{qh-eq-1}
	k_{\Omega}(x,y)\ge \log\left(1+\frac{\lambda_{\Omega}(x,y)}{\min\{\delta_{\Omega}(x),\delta_{\Omega}(y)\}}\right)\ge \log\left(1+\frac{d(x,y)}{\min\{\delta_{\Omega}(x),\delta_{\Omega}(y)\}}\right)\ge \left|\log \frac{\delta_{\Omega}(y)}{\delta_{\Omega}(x)}\right|,
\end{equation}
and if $\gamma$ is a rectifiable curve in $\Omega$ joining $x$ and $y$, then
\begin{equation}\label{qh-eq-2}
	\ell_k(\gamma)\ge \log\left(1+\frac{\ell_d(\gamma)}{\min\{\delta_{\Omega}(x),\delta_{\Omega}(y)\}}\right).
\end{equation}
In view of \eqref{qh-eq-1}, we have the following observation.
\begin{observation}\label{observation}
	If sequences $\{z_n\}$ and $\{w_n\} \subset \Omega$ converge to two different points in $\partial_d\Omega$, then $\lim_{n\to \infty}k_{\Omega}(z_n,w_n)=\infty$.
\end{observation}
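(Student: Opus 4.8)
The plan is to read the conclusion straight off the inequality \eqref{qh-eq-1}, which lower-bounds $k_\Omega(z_n,w_n)$ by the logarithm of $1$ plus a ratio whose numerator stays bounded away from $0$ while whose denominator tends to $0$. Write $p=\lim_n z_n$ and $q=\lim_n w_n$ in the completion $\overline{\Omega}^d$, with $p\ne q$ by hypothesis. Since $d$ extends to a genuine metric on $\overline{\Omega}^d$ and a metric is jointly continuous, the numerator satisfies $d(z_n,w_n)\to d(p,q)=:c$, and $p\ne q$ forces $c>0$; in particular $d(z_n,w_n)\ge c/2$ for all large $n$.

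To control the denominator I would use only the definition of $\delta_\Omega$. Because $p\in\partial_d\Omega$, we have $\delta_\Omega(z_n)=\inf\{d(z_n,w):w\in\partial_d\Omega\}\le d(z_n,p)\to 0$, and likewise $\delta_\Omega(w_n)\le d(w_n,q)\to 0$, so that $\min\{\delta_\Omega(z_n),\delta_\Omega(w_n)\}\to 0$. Feeding these two facts into the middle inequality of \eqref{qh-eq-1},
$$k_\Omega(z_n,w_n)\ge \log\!\left(1+\frac{d(z_n,w_n)}{\min\{\delta_\Omega(z_n),\delta_\Omega(w_n)\}}\right),$$
the fraction on the right tends to $+\infty$, being a numerator bounded below by $c/2$ over a vanishing denominator; hence $k_\Omega(z_n,w_n)\to\infty$, as claimed.

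There is no serious obstacle here, as the observation is a soft consequence of \eqref{qh-eq-1}. The only step deserving a word of care is the passage to the completion: that the convergences $z_n\stackrel{d}{\to}p$ and $w_n\stackrel{d}{\to}q$ in $\overline{\Omega}^d$ yield $d(z_n,w_n)\to d(p,q)$ by continuity of the extended metric, together with the remark that distinctness of $p$ and $q$ guarantees $d(p,q)>0$. Everything else follows immediately from the definition of $\delta_\Omega$.
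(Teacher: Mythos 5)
Your proposal is correct and follows exactly the route the paper intends: the paper derives the observation directly from the middle inequality of \eqref{qh-eq-1}, with the numerator $d(z_n,w_n)$ bounded below by continuity of the extended metric on $\overline{\Omega}^d$ and the denominator $\min\{\delta_\Omega(z_n),\delta_\Omega(w_n)\}$ tending to $0$ since the limits lie on $\partial_d\Omega$. You have simply written out the details that the paper leaves implicit in the phrase ``In view of \eqref{qh-eq-1}''.
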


At this stage, we mention the following important characterization of Gromov hyperbolicity of the quasihyperbolic metric, due to Bonk, Heinonen, and Koskela \cite[Propositions 7.12 and 7.14]{BHK} and Balogh and Buckley \cite[Theorem 0.1]{Balogh-2002}.

\begin{Thm}\cite[Propositions 7.12 and 7.14]{BHK}; \cite[Theorem 0.1]{Balogh-2002}
	Let $\Omega \subset \mathbb{R}^n$ be a proper subdomain. Then $(\Omega,k_{\Omega})$ is Gromov hyperbolic if and only if $\Omega$ satisfies both a Gehring--Hayman condition and a ball separation condition with respect to the inner metric.
\end{Thm}

\section{Proof of Theorem \ref{Main-1}}
Throughout this section, $(\Omega,d)$ is a bounded minimally nice space and we assume that $\overline{\Omega}^d$ is compact.
Throughout, we parametrize quasihyperbolic geodesics (and geodesic rays) by
$k_\Omega$-arclength; in particular, for a quasihyperbolic geodesic $\gamma$ we have
$k_\Omega(\gamma(s),\gamma(t)) = |s-t|$. We divide the proof of Theorem \ref{Main-1} into several lemmas.

\begin{lem}\label{AV-P7-lem1}
	Let $(\Omega,d)$ be a bounded minimally nice space and let $\gamma$ be a quasihyperbolic geodesic ray in $\Omega$. If for any sequence $t_n\to \infty$ the sequence $\{\gamma(t_n)\}$ $d$-converges to a point $x\in \overline{\Omega}^d$, then $x\in \partial_{d}\Omega$.
\end{lem}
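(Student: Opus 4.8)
The plan is to argue by contradiction. Since $x\in\overline{\Omega}^d=\Omega\cup\partial_d\Omega$, it suffices to rule out the possibility that $x$ is an interior point, so I would assume $x\in\Omega$ and set $r:=\delta_\Omega(x)$, which is strictly positive precisely because $x\notin\partial_d\Omega$. The underlying idea is that near an interior point the quasihyperbolic metric is controlled by the length metric, so the $d$-convergence $\gamma(t_n)\to x$ ought to force $k_\Omega$-convergence $\gamma(t_n)\to x$; but this is absurd for a geodesic ray, which runs off to infinity in $k_\Omega$.

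To make this precise, first I would invoke that $(\Omega,d)$ is minimally nice to upgrade the hypothesis $d(\gamma(t_n),x)\to 0$ to $\lambda_\Omega(\gamma(t_n),x)\to 0$. For each large $n$ I would then choose a rectifiable curve $\sigma_n$ from $x$ to $\gamma(t_n)$ with $L_n:=\ell_d(\sigma_n)\le\lambda_\Omega(\gamma(t_n),x)+\tfrac1n$, so that $L_n\to 0$, and in particular $L_n<r$ for all large $n$. Parametrizing $\sigma_n$ by $d$-arclength and using that $\delta_\Omega$ is $1$-Lipschitz with respect to $d$ (hence $\delta_\Omega(\sigma_n(s))\ge r-s$ along the curve), I would estimate the quasihyperbolic length:
$$
k_\Omega(x,\gamma(t_n))\le \ell_k(\sigma_n)=\int_0^{L_n}\frac{ds}{\delta_\Omega(\sigma_n(s))}\le\int_0^{L_n}\frac{ds}{r-s}=\log\frac{r}{r-L_n},
$$
and the right-hand side tends to $0$ as $n\to\infty$. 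Thus $k_\Omega(x,\gamma(t_n))\to 0$.

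Finally, since $\gamma$ is parametrized by $k_\Omega$-arclength, $k_\Omega(\gamma(0),\gamma(t_n))=t_n\to\infty$, whereas the triangle inequality gives $t_n\le k_\Omega(\gamma(0),x)+k_\Omega(x,\gamma(t_n))$; the right-hand side stays bounded, because $k_\Omega(\gamma(0),x)<\infty$ (both points lie in the rectifiably connected space $\Omega$) and the second term tends to $0$. This contradiction rules out $x\in\Omega$ and forces $x\in\partial_d\Omega$. The main obstacle is the displayed upper estimate for $k_\Omega(x,\gamma(t_n))$: it is exactly here that both the minimally nice hypothesis (to pass from $d$-smallness to $\lambda_\Omega$-smallness) and the positivity $r=\delta_\Omega(x)>0$ enter, and everything else is a routine use of the triangle inequality and the arclength parametrization.
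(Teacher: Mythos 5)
Your proof is correct and follows essentially the same route as the paper: assume $x\in\Omega$, show $k_\Omega(x,\gamma(t_n))\to 0$ by a local comparison of $k_\Omega$ with the length metric near $x$, and contradict $k_\Omega(\gamma(0),\gamma(t_n))=t_n\to\infty$. If anything, your version is slightly more careful than the paper's, which writes the local bound directly in terms of $d(\gamma(t_n),x)$, whereas you correctly route through $\lambda_\Omega$ via the minimally nice hypothesis before integrating $1/\delta_\Omega$ along a near-optimal curve.
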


\begin{proof}
	Assume for a contradiction that $x\notin \partial_{d}\Omega$. Then $x\in \Omega$.
	Since $\gamma$ is a quasihyperbolic geodesic ray parametrized by $k_\Omega$-arclength,
	\[
	k_{\Omega}(\gamma(0),\gamma(t_n))=t_n\to\infty \quad \text{as } n\to\infty.
	\]
	On the other hand, $\gamma(t_n)\to x$ in the metric $d$ with $x\in\Omega$, so
	$\delta_\Omega(\gamma(t_n))\to \delta_\Omega(x)>0$. Hence, for $n$ large we have
	$\delta_\Omega(\gamma(t_n))\ge \frac{1}{2}\delta_\Omega(x)$, and therefore
	\[
	k_\Omega(\gamma(t_n),x)\le \frac{2}{\delta_\Omega(x)}\, d(\gamma(t_n),x)\xrightarrow[n\to\infty]{}0.
	\]
	By the triangle inequality,
	\[
	\bigl|k_\Omega(\gamma(0),\gamma(t_n)) - k_\Omega(\gamma(0),x)\bigr|\le k_\Omega(\gamma(t_n),x)\to 0,
	\]
	and consequently $k_\Omega(\gamma(0),\gamma(t_n))\to k_\Omega(\gamma(0),x)<\infty$, a contradiction. Therefore $x\in \partial_{d}\Omega$.
\end{proof}

In the next lemma, we show that in a quasihyperbolically visible space $(\Omega,d)$, for any sequence $t_n\to \infty$ the limit $\lim_{n\to \infty}\gamma(t_n)$ exists in the metric $d$, and hence by Lemma \ref{AV-P7-lem1} it belongs to $\partial_{d}\Omega$.

\begin{lem}\label{AV-P7-lem1.1}
	Let $(\Omega,d)$ be a bounded minimally nice space. Suppose $(\Omega,d)$ is quasihyperbolically visible. If $\gamma$ is a quasihyperbolic geodesic ray in $\Omega$, then for any sequence $t_n\to \infty$ the limit $\lim_{n\to \infty}\gamma(t_n)$ exists in the metric $d$.
\end{lem}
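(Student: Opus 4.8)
The plan is to argue by contradiction, combining the compactness of $\overline{\Omega}^d$ with the visibility property and Lemma \ref{AV-P7-lem1}. Suppose the limit $\lim_{n\to\infty}\gamma(t_n)$ fails to exist. Since $\overline{\Omega}^d$ is compact, the bounded sequence $\{\gamma(t_n)\}$ must then possess two distinct subsequential $d$-limits, say $p$ and $q$ with $p\neq q$. Each limit is attained along a subsequence of parameters tending to $\infty$, so Lemma \ref{AV-P7-lem1} guarantees $p,q\in\partial_d\Omega$; this places us exactly in the hypotheses of the visibility condition.

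Next I would extract and interlace the two realizing subsequences. Passing to subsequences and relabelling, I obtain parameter sequences with $\gamma(s_k)\stackrel{d}{\to}p$ and $\gamma(r_k)\stackrel{d}{\to}q$, both $s_k,r_k\to\infty$, and (after reordering the endpoints of each pair) $s_k<r_k$. The structural point that makes the argument work is that, because $\gamma$ is a quasihyperbolic geodesic ray parametrized by $k_\Omega$-arclength, each restriction $\gamma|_{[s_k,r_k]}$ is itself a quasihyperbolic geodesic joining $\gamma(s_k)$ and $\gamma(r_k)$: indeed $k_\Omega(\gamma(s_k),\gamma(r_k))=r_k-s_k=\ell_k(\gamma|_{[s_k,r_k]})$.

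Now I would apply the visibility property (Definition \ref{qh visible}) to the distinct boundary points $p,q$: there is a compact set $K\subset\Omega$ such that, for the admissible sequences $x_k=\gamma(s_k)\to p$ and $y_k=\gamma(r_k)\to q$, every quasihyperbolic geodesic joining $x_k$ and $y_k$ meets $K$. Applying this to the geodesics $\gamma|_{[s_k,r_k]}$ yields parameters $u_k\in[s_k,r_k]$ with $\gamma(u_k)\in K$; since $u_k\ge s_k\to\infty$, we have $u_k\to\infty$. Thus a tail of the ray is trapped inside the fixed compact subset $K$ of $\Omega$.

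Finally the contradiction: as $K$ is $d$-compact, a subsequence of $\{\gamma(u_k)\}$ converges in the metric $d$ to some $w\in K\subset\Omega$, whence $w\notin\partial_d\Omega$. On the other hand the corresponding parameters satisfy $u_k\to\infty$, so Lemma \ref{AV-P7-lem1} forces $w\in\partial_d\Omega$ — a contradiction. Hence $\{\gamma(t_n)\}$ cannot have two distinct subsequential limits, so by compactness of $\overline{\Omega}^d$ it has a unique limit and $\lim_{n\to\infty}\gamma(t_n)$ exists. I expect the only genuinely substantive step to be the third paragraph, namely recognizing that a subarc of the geodesic ray is a bona fide quasihyperbolic geodesic so that visibility applies and confines a tail of $\gamma$ to $K$; the remaining steps are compactness bookkeeping and direct appeals to Lemma \ref{AV-P7-lem1}.
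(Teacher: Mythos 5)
Your proposal is correct and follows essentially the same route as the paper: compactness plus Lemma \ref{AV-P7-lem1} reduces the claim to uniqueness of subsequential limits, and visibility applied to the subarcs $\gamma|_{[s_k,r_k]}$ traps points $\gamma(u_k)$ with $u_k\to\infty$ in a compact set $K\subset\Omega$. The only (immaterial) difference is the final contradiction: the paper notes directly that $u_k=k_\Omega(\gamma(0),\gamma(u_k))\le\max_{x\in K}k_\Omega(\gamma(0),x)<\infty$, whereas you reapply Lemma \ref{AV-P7-lem1} to a convergent subsequence of $\{\gamma(u_k)\}$; both are valid.
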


\begin{proof}
Since $\overline{\Omega}^d$ is compact, the sequence $\{\gamma(t_n)\}$ has a convergent subsequence. By Lemma \ref{AV-P7-lem1}, every subsequential limit lies in $\partial_d\Omega$.
	Thus, it suffices to show that $\{\gamma(t_n)\}$ has a unique limit point. Assume for a contradiction that $\{\gamma(t_n)\}$ has two distinct limit points $p,q\in \partial_{d}\Omega$, i.e.\ there are two subsequences $\{\gamma(t_{n_{k}})\}_{k}$ and $\{\gamma(t_{s_{k}})\}_{k}$ such that
	\[
	\lim_{k\to \infty}\gamma(t_{n_{k}})=p\neq q= \lim_{k\to \infty}\gamma(t_{s_{k}}).
	\]
	Passing to a subsequence if necessary, we may assume that $t_{n_{k}}<t_{s_{k}}<t_{n_{k+1}}$ for all $k$.
	Consider the quasihyperbolic geodesic segments $\gamma_k=\gamma|_{[t_{n_{k}},t_{s_{k}}]}$. Since $(\Omega,d)$ is a quasihyperbolically visible space, there exists a compact set $K\subset \Omega$ such that, for all sufficiently large $k$, the segment $\gamma_k$ meets $K$.
	Thus, for each such $k$ we can pick $t'_k\in (t_{n_k},t_{s_k})$ with $\gamma(t'_k)\in K$.
Since $t_{n_k}\to\infty$ and $t'_k\in(t_{n_k},t_{s_k})$, we have $t'_k\to\infty$.
	Moreover,
	\[
	t'_k = k_\Omega(\gamma(0),\gamma(t'_k)) \le \max_{x\in K} k_\Omega(\gamma(0),x) < \infty,
	\]
	a contradiction. Hence $\{\gamma(t_n)\}$ has a unique limit point, and therefore converges in $d$.
\end{proof}

In particular, if $t_n\to\infty$ and $s_n\to\infty$, then $\lim_{n\to\infty}\gamma(t_n)=\lim_{n\to\infty}\gamma(s_n)$.
Therefore, we have the following lemma, which says that in a quasihyperbolically visible space each quasihyperbolic geodesic ray lands at a point of $\partial_{d}\Omega$.

\begin{lem}\label{AV-P7-lem2}
	Let $(\Omega,d)$ be a bounded minimally nice space. Suppose $(\Omega,d)$ is quasihyperbolically visible. If $\gamma$ is a quasihyperbolic geodesic ray in $\Omega$, then $\lim\limits_{t\to \infty}\gamma(t)$ exists in $\partial_d\Omega$.
\end{lem}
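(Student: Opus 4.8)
The plan is to deduce the continuous-parameter limit $\lim_{t\to\infty}\gamma(t)$ from the purely sequential statement already established in Lemma \ref{AV-P7-lem1.1}, using the standard criterion that $\gamma(t)$ converges as $t\to\infty$ if and only if $\gamma(t_n)$ converges to one and the same point for every sequence $t_n\to\infty$. The only real content is to verify that the limits furnished by Lemma \ref{AV-P7-lem1.1} do not depend on the chosen sequence, and the mechanism for this is an interleaving argument.

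First I would show that all subsequential limits coincide. Take two arbitrary sequences $t_n\to\infty$ and $s_n\to\infty$. By Lemma \ref{AV-P7-lem1.1} each of $\{\gamma(t_n)\}$ and $\{\gamma(s_n)\}$ converges in $d$, say $\gamma(t_n)\to p$ and $\gamma(s_n)\to q$, and by Lemma \ref{AV-P7-lem1} both $p,q\in\partial_d\Omega$. To see $p=q$, interleave the two sequences into a single sequence $u_n$ (for instance $u_{2n-1}=t_n$ and $u_{2n}=s_n$); then $u_n\to\infty$, so Lemma \ref{AV-P7-lem1.1} applies again and the whole sequence $\{\gamma(u_n)\}$ converges to a single point $\xi$. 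Since $\{\gamma(t_n)\}$ and $\{\gamma(s_n)\}$ are subsequences of the convergent sequence $\{\gamma(u_n)\}$, they must share its limit, so $p=q=\xi\in\partial_d\Omega$. As $t_n,s_n$ were arbitrary, every sequence $\gamma(t_n)$ with $t_n\to\infty$ converges to this common value $\xi$.

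Next I would upgrade this to the continuous limit by contraposition. Suppose $\gamma(t)\not\to\xi$ as $t\to\infty$; then there exist $\varepsilon>0$ and a sequence $t_n\to\infty$ with $d(\gamma(t_n),\xi)\ge\varepsilon$ for all $n$. This contradicts the previous paragraph, which forces $\gamma(t_n)\to\xi$. Hence $\lim_{t\to\infty}\gamma(t)=\xi$, and $\xi\in\partial_d\Omega$ by Lemma \ref{AV-P7-lem1}.

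I do not expect a genuine obstacle here: once Lemma \ref{AV-P7-lem1.1} is in hand, the argument is a routine reduction from ``the limit exists along every sequence'' to ``the limit exists along every sequence and is independent of it,'' carried out by the interleaving trick. The one point to be careful about is that Lemma \ref{AV-P7-lem1.1} guarantees convergence of $\{\gamma(t_n)\}$ for \emph{each} individual sequence (not merely the existence of subsequential limits); it is exactly this that allows the interleaved sequence $\{\gamma(u_n)\}$ to converge and thereby pin down a single boundary point.
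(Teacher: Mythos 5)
Your proof is correct and follows essentially the same route as the paper: reduce to the sequential convergence of Lemma \ref{AV-P7-lem1.1}, show the limit is independent of the chosen sequence, and conclude via Lemma \ref{AV-P7-lem1}. The only (cosmetic) difference is that you establish sequence-independence by the interleaving trick, which lets you cite the \emph{statement} of Lemma \ref{AV-P7-lem1.1}, whereas the paper appeals to its \emph{proof}; your version is, if anything, slightly cleaner.
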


\begin{proof}
	Let $t_n\to\infty$. By Lemma \ref{AV-P7-lem1.1}, the limit $\lim_{n\to\infty}\gamma(t_n)$ exists in the metric $d$, and by Lemma \ref{AV-P7-lem1} it belongs to $\partial_d\Omega$.
	Moreover, if $s_n\to\infty$ is another sequence, then the proof of Lemma \ref{AV-P7-lem1.1} yields
	$\lim_{n\to\infty}\gamma(t_n)=\lim_{n\to\infty}\gamma(s_n)$.
	Hence $\lim_{t\to\infty}\gamma(t)$ exists and lies in $\partial_d\Omega$.
\end{proof}

Further, we prove that equivalent quasihyperbolic geodesic rays land at the same point.

\begin{lem}\label{AV-P7-lem3}
	Let $(\Omega,d)$ be quasihyperbolically visible. Fix $x_0\in \Omega$ and let $\gamma$ and $\sigma$ be two quasihyperbolic geodesic rays with $\gamma(0)=x_0=\sigma(0)$ such that $\gamma \sim \sigma$. Then
	\[
	\lim_{t\to \infty}\gamma(t)=\lim_{t\to \infty} \sigma(t).
	\]
\end{lem}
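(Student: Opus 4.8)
The plan is to combine the landing statement of Lemma~\ref{AV-P7-lem2} with the divergence criterion of Observation~\ref{observation}, and obtain a contradiction from the assumption that the two equivalent rays land at distinct boundary points. First I would make explicit the meaning of the hypothesis $\gamma\sim\sigma$: since $(\Omega,k_\Omega)$ is the Gromov hyperbolic space under consideration, the equivalence relation on geodesic rays is read in the quasihyperbolic metric, so that
\[
M:=\sup_{t\ge 0}k_\Omega(\gamma(t),\sigma(t))<\infty.
\]

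Next, since $(\Omega,d)$ is quasihyperbolically visible, Lemma~\ref{AV-P7-lem2} guarantees that both limits
\[
p:=\lim_{t\to\infty}\gamma(t)\in\partial_d\Omega,\qquad q:=\lim_{t\to\infty}\sigma(t)\in\partial_d\Omega
\]
exist. Assume for contradiction that $p\neq q$. I would then fix any sequence $t_n\to\infty$ and set $z_n:=\gamma(t_n)$ and $w_n:=\sigma(t_n)$, so that $z_n\stackrel{d}{\to}p$ and $w_n\stackrel{d}{\to}q$ with $p,q$ distinct points of $\partial_d\Omega$.

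The final step is to apply Observation~\ref{observation}: because $\{z_n\}$ and $\{w_n\}$ converge to two different points of $\partial_d\Omega$, we get $k_\Omega(z_n,w_n)\to\infty$. This is incompatible with the uniform bound coming from $\gamma\sim\sigma$, since $k_\Omega(z_n,w_n)=k_\Omega(\gamma(t_n),\sigma(t_n))\le M<\infty$ for every $n$. The contradiction forces $p=q$, which is precisely the assertion. The argument is essentially a one-line consequence of the two quoted facts, and there is no substantial obstacle; the only point I would take care to verify is the bookkeeping on the equivalence relation, namely that the uniform closeness of $\gamma$ and $\sigma$ is measured in $k_\Omega$ (the metric realizing Gromov hyperbolicity), so that it indeed contradicts the $k_\Omega$-divergence supplied by Observation~\ref{observation}.
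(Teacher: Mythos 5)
Your proposal is correct and follows exactly the same route as the paper's proof: invoke Lemma \ref{AV-P7-lem2} for the existence of both landing points, then derive a contradiction between Observation \ref{observation} (which forces $k_\Omega(\gamma(t_n),\sigma(t_n))\to\infty$ if the landing points differ) and the uniform bound supplied by $\gamma\sim\sigma$. Your extra remark that the equivalence relation is measured in $k_\Omega$ is a sensible bookkeeping check and is consistent with the paper's setup.
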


\begin{proof}
	In view of Lemma \ref{AV-P7-lem2}, both limits exist in $\partial_d\Omega$.
	Assume for a contradiction that
	\[
	\lim_{t\to \infty}\gamma(t)=p\neq q=\lim_{t\to \infty} \sigma(t).
	\]
	Choose a sequence $s_n\to \infty$ such that $\gamma(s_n)\to p$ and $\sigma(s_n)\to q$.
	Let $x_n=\gamma(s_n)$ and $y_n=\sigma(s_n)$; then $x_n\to p$ and $y_n\to q$.
	By Observation \ref{observation}, $\lim\limits_{n \to \infty}k_{\Omega}(x_n,y_n)=\infty$, contradicting $\gamma\sim \sigma$.
\end{proof}

The following lemma will be useful to show that the extension map is continuous.

\begin{lem}\label{continuous}
	Let $(\Omega,d)$ be a bounded minimally nice space. Suppose $(\Omega,d)$ is quasihyperbolically visible. Fix $x_0\in \Omega$.
	\begin{itemize}
		\item[(i)] Let $\gamma_n:[0,\infty)\to (\Omega,k_{\Omega})$ be quasihyperbolic geodesic rays with $\gamma_n(0)=x_0$ for all $n$.
		If $(\gamma_n)$ converges uniformly on compact subsets to a geodesic ray $\gamma:[0,\infty)\to (\Omega,k_{\Omega})$, then
		\[
		\lim_{t\to \infty}\gamma(t)=\lim_{n\to \infty}\lim_{t\to\infty}\gamma_n(t).
		\]
		\item[(ii)] Let $\gamma_n:[0,T_n]\to (\Omega,k_{\Omega})$ be quasihyperbolic geodesics with $\gamma_n(0)=x_0$ for all $n$, where $T_n\to\infty$.
		If $(\gamma_n)$ converges uniformly on compact subsets to a geodesic ray $\gamma:[0,\infty)\to (\Omega,k_{\Omega})$, then
		\[
		\lim_{t\to \infty}\gamma(t)=\lim_{n\to \infty}\gamma_n(T_n).
		\]
	\end{itemize}
\end{lem}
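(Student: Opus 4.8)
The plan is to treat (i) and (ii) by a single argument, the only difference being the source of the ``far'' endpoints. Write $p:=\lim_{t\to\infty}\gamma(t)$, which exists in $\partial_d\Omega$ by Lemma~\ref{AV-P7-lem2}. In (i) set $p_n:=\lim_{t\to\infty}\gamma_n(t)$ (each exists in $\partial_d\Omega$, again by Lemma~\ref{AV-P7-lem2}) and let $w_n:=p_n$; in (ii) let $w_n:=\gamma_n(T_n)$. First I would observe that every $d$-subsequential limit of $(w_n)$ lies in $\partial_d\Omega$: in (i) this is automatic, while in (ii) it follows exactly as in Lemma~\ref{AV-P7-lem1}, since $k_\Omega(x_0,\gamma_n(T_n))=T_n\to\infty$ rules out an interior limit. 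As $\overline{\Omega}^d$ is compact, $(w_n)$ has convergent subsequences, so it suffices to prove that every subsequential limit equals $p$; the claimed equalities then follow.

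Next I would argue by contradiction: suppose that along some subsequence $w_{n}\xrightarrow{d} q\in\partial_d\Omega$ with $q\neq p$ (relabelled as the full sequence). The heart of the proof is a diagonal construction producing, \emph{on a single ray/segment} $\gamma_{n_m}$, one point near $p$ and one point near $q$. Using $\gamma(t)\to p$, choose $R_m\to\infty$ with $d(\gamma(R_m),p)\to 0$. For each fixed $R_m$, uniform convergence on $[0,R_m]$ gives $k_\Omega(\gamma_n(R_m),\gamma(R_m))\to 0$ as $n\to\infty$, which upgrades to $d(\gamma_n(R_m),\gamma(R_m))\to 0$ by the lower bound \eqref{qh-eq-1} (small $k_\Omega$-distance to the interior point $\gamma(R_m)$ forces small $d$-distance). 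Hence I may pick indices $n_m$, increasing and drawn from the chosen subsequence, so large that $x_m:=\gamma_{n_m}(R_m)\to p$. For the far endpoint I take $y_m:=\gamma_{n_m}(L_{n_m})$ in (i), with $L_{n_m}>R_m$ chosen so that $\gamma_{n_m}(L_{n_m})$ is within $1/n_m$ of $p_{n_m}$ (whence $y_m\to q$), and $y_m:=\gamma_{n_m}(T_{n_m})$ in (ii), where $y_m\to q$ directly and $T_{n_m}>R_m$ for large $m$.

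Finally I would invoke visibility. The restriction $\gamma_{n_m}|_{[R_m,\,\cdot\,]}$ is a quasihyperbolic geodesic joining $x_m$ and $y_m$; since $x_m\to p$, $y_m\to q$, and $p\neq q$, the visibility property provides a compact $K\subset\Omega$ that this segment meets for all large $m$. Thus there is $\theta_m\ge R_m$ with $\gamma_{n_m}(\theta_m)\in K$. Because $\gamma_{n_m}$ is parametrized by $k_\Omega$-arclength from $x_0$, we get $\theta_m=k_\Omega(x_0,\gamma_{n_m}(\theta_m))\le\max_{z\in K}k_\Omega(x_0,z)<\infty$, contradicting $\theta_m\ge R_m\to\infty$. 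Therefore $q=p$, which proves both parts.

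The step I expect to be the main obstacle is the coupled diagonal choice in the second paragraph: forcing $x_m\to p$ needs $n_m$ large relative to $R_m$ (so that uniform convergence on $[0,R_m]$ has taken effect), while $y_m\to q$ must come from the \emph{same} ray or segment $\gamma_{n_m}$. Keeping both endpoints on one geodesic is precisely what converts ``meets $K$'' into the arclength bound $\theta_m\le\max_{z\in K}k_\Omega(x_0,z)$ that yields the contradiction; a secondary point requiring care is the passage from $k_\Omega$- to $d$-convergence of $\gamma_n(R_m)$ near the interior point $\gamma(R_m)$, which I obtain from \eqref{qh-eq-1}.
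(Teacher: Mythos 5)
Your proposal is correct and follows essentially the same route as the paper: a diagonal choice of indices producing, on a single geodesic $\gamma_{n_m}$, a point near $\lim_t\gamma(t)$ and a far point near the putative second limit, followed by the visibility property and the $k_\Omega$-arclength bound $\theta_m\le\max_{z\in K}k_\Omega(x_0,z)$ to reach a contradiction. The only differences are cosmetic: you merge (i) and (ii) into one argument (the paper writes them separately but identically), and you make explicit the upgrade from $k_\Omega$- to $d$-convergence via \eqref{qh-eq-1}, which the paper leaves implicit.
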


\begin{proof}
Since $\overline{\Omega}^d$ is compact, it suffices to prove the claim along an arbitrary convergent subsequence of the boundary values.
	
	\noindent\textbf{Proof of (i).}
By Lemma \ref{AV-P7-lem2}, for each $n$, let $p_n:=\lim_{t\to\infty}\gamma_n(t)\in\partial_d\Omega$ and let $q:=\lim_{t\to\infty}\gamma(t)\in\partial_d\Omega$. Passing to a subsequence, we may assume $p_n\to p\in\partial_d\Omega$. We show that $p=q$. Assume for a contradiction that $p\neq q$. Since $\gamma$ lands at $q$, for every $j\in \mathbb{N}$, there exists $s_j\to \infty$ such that for every $t\ge s_j$, $d(\gamma(t),q)<1/j$. In particular, $d(\gamma(s_j),q)<1/j$. Since $\gamma_n\to\gamma$ uniformly on $[0,s_j]$, there exists $N(j)$ such that
	$d(\gamma_n(s_j),\gamma(s_j))<1/j$ for all $n\ge N(j)$, and hence
	\[
	d(\gamma_n(s_j),q)<2/j\qquad\text{for all }n\ge N(j).
	\]
Now choose an increasing sequence of indices $n_j\ge N(j)$ such that $p_{n_j}\stackrel{d}{\to} p$
and $d(p_{n_j},p)<1/j$ for each $j$. For each $j$, since $\gamma_{n_j}(t)\to p_{n_j}$ as $t\to\infty$, pick $t_j>s_j$ such that
$d(\gamma_{n_j}(t_j),p_{n_j})<1/j$. Then
\[
d(\gamma_{n_j}(t_j),p)\le d(\gamma_{n_j}(t_j),p_{n_j})+d(p_{n_j},p)<2/j.
\]
Set $y_j:=\gamma_{n_j}(s_j)$ and $x_j:=\gamma_{n_j}(t_j)$. Then $y_j\to q$ and $x_j\to p$. By quasihyperbolic visibility, there exists a compact set
$K\subset\Omega$ such that every quasihyperbolic geodesic joining $x_j$ to $y_j$ intersects $K$.
In particular, the geodesic segment $\gamma_{n_j}|_{[s_j,t_j]}$ intersects $K$, so we may choose
$c_j\in[s_j,t_j]$ with $\gamma_{n_j}(c_j)\in K$.
Since $\gamma_{n_j}$ is parametrized by $k_\Omega$-arclength and $\gamma_{n_j}(0)=x_0$,
\[
c_j=k_\Omega(x_0,\gamma_{n_j}(c_j))\le \max_{x\in K}k_\Omega(x_0,x)=:M<\infty.
\]
But $c_j\ge s_j\to\infty$, a contradiction. Hence $p=q$, proving (i).
 
	\noindent\textbf{Proof of (ii).}
	Let $x_n:=\gamma_n(T_n)\in\Omega$. Since $\overline{\Omega}^d$ is compact, passing to a subsequence, we may assume that $x_n\to p\in\overline{\Omega}^d$.
	Since $T_n=k_\Omega(x_0,x_n)\to\infty$ and $k_\Omega$ induces the original topology on $\Omega$, we have $p\in\partial_d\Omega$.
	Let $q:=\lim_{t\to\infty}\gamma(t)\in\partial_d\Omega$. Our aim is to show that $p=q$. If not, choose $s_j\to\infty$ such that $d(\gamma(s_j),q)<1/j$ for each $j$.
		Since $\gamma_n\to\gamma$ uniformly on $[0,s_j]$, for each $j$ there exists $N(j)$ such that
		$d(\gamma_n(s_j),q)<2/j$ for all $n\ge N(j)$.
		Also, since $x_n\to p$, for each $j$ there exists $N'(j)$ such that $d(x_n,p)<1/j$ for all $n\ge N'(j)$. Pick an increasing sequence $n_j\ge \max\{N(j),N'(j)\}$ such that $T_{n_j}>s_j$ (this is possible since $T_n\to \infty$). Set $y_j:=\gamma_{n_j}(s_j)$ and $x_j:=\gamma_{n_j}(T_{n_j})$. Then $y_j\to q$ and $x_j\to p$. By quasihyperbolic visibility, there exists a compact set $K\subset\Omega$ such that every quasihyperbolic geodesic joining $x_j$ to $y_j$ intersects $K$.
	In particular, the segment $\gamma_{n_j}|_{[s_j,T_{n_j}]}$ intersects $K$, so choose
	$c_j\in[s_j,T_{n_j}]$ with $\gamma_{n_j}(c_j)\in K$. As above,
	\[
	c_j=k_\Omega(x_0,\gamma_{n_j}(c_j))\le \max_{x\in K}k_\Omega(x_0,x)=M<\infty,
	\]
	but $c_j\ge s_j\to\infty$, a contradiction. Hence $p=q$ and (ii) follows.
\end{proof}

\begin{proof}[\textbf{Proof of Theorem \ref{Main-1}}]
	The implication (i)$\Rightarrow$(ii) follows from \cite[Theorem 1.1]{Allu-2023}. Conversely, assume that $\Omega$ is quasihyperbolically visible. Fix $z_0\in\Omega$.
		For $\xi\in\partial_G\Omega$, choose a representative quasihyperbolic geodesic ray $\gamma_\xi$ with $\gamma_\xi(0)=z_0$ and set
		\[
		\Phi(\xi):=\lim_{t\to\infty}\gamma_\xi(t)\in\partial_d\Omega,
		\]
		which exists by Lemma \ref{AV-P7-lem2} and is independent of the representative by Lemma \ref{AV-P7-lem3}. For $z\in\Omega$ set $\Phi(z)=z$.
	
	To prove continuity of $\Phi:\overline{\Omega}^G\to\overline{\Omega}^d$, let $x_n\stackrel{Gromov}{\to}x$ in $\overline{\Omega}^G$.
	By definition of the Gromov topology (see Definition \ref{Gromov topology}), we can represent $x_n$ by generalized quasihyperbolic geodesics $\gamma_n$ based at $z_0$ whose endpoints are $x_n$, such that every subsequence admits a further subsequence converging uniformly on compact subsets to a generalized geodesic $\gamma$ ending at $x$.
	If $x\in\Omega$, then $\gamma$ is a finite geodesic and $\Phi(x_n)\to x$ follows immediately.
	If $x\in\partial_G\Omega$, then $\gamma$ is a geodesic ray in the class $x$, and Lemma \ref{continuous}\,(i)--(ii) yields $\Phi(x_n)\to \Phi(x)$.
	Hence $\Phi$ is continuous.
	
	Since $\overline{\Omega}^G$ is compact and $\overline{\Omega}^d$ is Hausdorff, $\Phi(\overline{\Omega}^G)$ is compact and therefore closed in $\overline{\Omega}^d$.
	Moreover, $\Phi(\Omega)=\Omega$, which is dense in $\overline{\Omega}^d$, so $\Phi$ is surjective.
	
	Now, we only need to show that $\Phi$ is a homeomorphism. To show that $\Phi$ is a homeomorphism, it remains to prove that $\Phi$ is injective. Let $\xi,\eta\in\partial_G\Omega$ with $\xi\neq\eta$.
		Choose a geodesic line $\Gamma:(-\infty,\infty)\to(\Omega,k_\Omega)$ with
		$\Gamma(-\infty)=\xi$ and $\Gamma(\infty)=\eta$.
		Set
		\[
		p:=\lim_{t\to\infty}\Gamma(t)\in\partial_d\Omega,
		\qquad
		q:=\lim_{t\to-\infty}\Gamma(t)\in\partial_d\Omega,
		\]
		which exist by applying Lemma \ref{AV-P7-lem2} to the rays $t\mapsto\Gamma(t)$ and $t\mapsto\Gamma(-t)$.
		We claim that $p\neq q$. Assume for a contradiction that $p=q$.
		Choose sequences $t_n\to\infty$ and $s_n\to-\infty$ and set $x_n:=\Gamma(t_n)$ and $y_n:=\Gamma(s_n)$.
		Then $x_n\to p$ and $y_n\to p$ in $d$, hence $d(x_n,y_n)\to0$.
		By quasiconvexity, for each $n$ there exists a curve $\sigma_n$ joining $x_n$ to $y_n$ with
		$\ell_d(\sigma_n)\le A\,d(x_n,y_n)$.
		By the Gehring--Hayman theorem, the quasihyperbolic geodesic segment $\Gamma[x_n,y_n]$ satisfies
		\[
		\ell_d(\Gamma[x_n,y_n])\le C_{gh}\,\ell_d(\sigma_n)\le C_{gh}A\,d(x_n,y_n)\xrightarrow[n\to\infty]{}0.
		\]
		Fix $R>0$. For $n$ large enough we have $s_n\le -R\le R\le t_n$, hence the segment $\Gamma([-R,R])$
		is contained in $\Gamma[x_n,y_n]$, and therefore
		\[
		\ell_d(\Gamma([-R,R]))\le \ell_d(\Gamma[x_n,y_n])\to0.
		\]
		Thus $\ell_d(\Gamma([-R,R]))=0$ for every $R>0$, which forces $\Gamma$ to be constant, a contradiction.
		Hence $p\neq q$, and therefore $\Phi(\xi)\neq\Phi(\eta)$. Thus $\Phi$ is a continuous bijection from the compact space $\overline{\Omega}^G$ to the Hausdorff space $\overline{\Omega}^d$,
	and hence $\Phi$ is a homeomorphism. This completes the proof of Theorem \ref{Main-1}.
\end{proof}

\section{Proof of Theorem \ref{Main-2}}
To prove Theorem \ref{Main-2}, we need the following important remark.
\begin{rem}\label{imp-rem}
	It is often convenient to parametrize a curve $\gamma$ in $(\Omega,d)$ by quasihyperbolic length. We say that $g:[0,r]\to \Omega$ is a quasihyperbolic parametrization of a curve $\gamma:[a,b]\to (\Omega,d)$ if $\ell_k\big(g|_{[0,t]}\big)=t$ for all $t\in [0,r]$. Every rectifiable curve $\gamma$ in $(\Omega,d)$ has a quasihyperbolic parametrization $g:[0,r]\to \Omega$ with image $\gamma([a,b])$, and $g$ satisfies the Lipschitz condition
	$$d(g(s),g(t))\le M|s-t|,$$
	where $M=\max\{\delta_{\Omega}(x):x\in \gamma([a,b])\}$. Since the metric derivative for Lipschitz curves exists a.e. (for details we refer to \cite{Ambrosio-1990}, \cite[p.~2 and p.~24]{Book}, and \cite[Theorem 3.2]{Ambrosio-2000}), we have
	\begin{itemize}
		\item[(i)] $r=\ell_k(g)$,
		\item[(ii)] for a.e.\ $t\in[0,r]$ we have $|g'|(t)=\delta_{\Omega}(g(t))$, and for all $0\le s<t\le r$,
		$$d(g(s),g(t))\le \int_{s}^{t}|g'|(\tau)\,d\tau.$$
	\end{itemize}
	Here $|g'|(t)$ denotes the metric derivative in $(\Omega,d)$. If $\gamma$ is a quasihyperbolic geodesic, then $g:[0,r]\to (\Omega,k_{\Omega})$ is an isometry and we say that $g$ is a quasihyperbolic geodesic from $g(0)$ to $g(r)$.
\end{rem}

\begin{proof}[\textbf{Proof of Theorem \ref{Main-2}}]
	Suppose $(\Omega,d)$ is not quasihyperbolically visible. Then there exist two distinct points $p,q\in \partial_d \Omega$ and sequences $\{x_{n}\}, \{y_{n}\} \subset \Omega$ such that $x_{n}\stackrel{d}{\to} p$ and $y_{n}\stackrel{d}{\to} q$ as $n\to \infty$, and a sequence of quasihyperbolic geodesics $\gamma_{n}:[a_n,b_n]\to \Omega$ joining $x_n$ and $y_n$ such that
	\begin{equation}\label{**}
		\max_{t\in[a_{n},b_{n}]}\delta_{\Omega} (\gamma_{n}(t)) \rightarrow 0 \quad \text{as } n \rightarrow \infty.
	\end{equation}
	By a suitable {\it quasihyperbolic parametrization} $g_n:[a_{n},b_{n}] \rightarrow \Omega$ of $\gamma_{n}$ with $g_{n}(a_{n}) = x_{n}$, $g_{n}(b_{n}) = y_{n}$ and $|b_n-a_n|=\ell_{k}(\gamma_{n})$, we can assume that for all $n$ we have $a_n\le 0\le b_n$ and
	\begin{equation}\label{***}
		\max_{t\in[a_{n},b_{n}]} \delta_{\Omega} (g_{n}(t))=\delta_{\Omega}(g_{n}(0)).
	\end{equation}
	Furthermore, by Remark \ref{imp-rem} each $g_n$ satisfies the Lipschitz condition
	$$d(g_n(s),g_n(t))\le M_n|s-t|,\qquad s,t\in [a_n,b_n],$$
	where $M_n=\max\{\delta_{\Omega}(x):x\in \gamma_n([a_n,b_n])\}$. In view of \eqref{**}, each $g_n$ is $M$-Lipschitz with $M=\sup_{n\ge 1} M_n<\infty$. Therefore, by the Arzel\`a--Ascoli theorem, passing to a subsequence if necessary, we may assume:
	\begin{itemize}
		\item[(i)] $a_{n}\rightarrow a \in [-\infty,0]$ and $b_{n} \rightarrow b \in [0,\infty]$,
		\item[(ii)] $g_{n}$ converges uniformly on every compact subset of $(a,b)$ to a continuous map $g:(a,b)\rightarrow \overline{\Omega}^d$, and $g_{n}(a_{n}) \rightarrow p$, $g_{n}(b_{n}) \rightarrow q$.
	\end{itemize}
	
	We make the following observation.
	\begin{observation}\label{O}
		$a<b$ and $a$ and $b$ cannot both be finite.
		\begin{itemize}
			\item[(i)] $a<b$ follows from the fact that $d(g_n(a_n),g_n(b_n))\le M|a_n-b_n|$ for all $n$.
			\item[(ii)] By Remark \ref{imp-rem}, $g_n:[a_n,b_n]\to (\Omega,k_{\Omega})$ is an isometry. Hence
			$$|b-a|=\lim_{n\to \infty}|b_n-a_n|=\lim_{n\to \infty}k_{\Omega}(g_{n}(b_n),g_{n}(a_n))=\infty
			\quad (\mbox{by Observation \ref{observation}}).$$
			In particular, $a$ and $b$ cannot both be finite.
		\end{itemize}
	\end{observation}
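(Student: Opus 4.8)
The plan is to read off the two assertions from the two distinct metric structures that each $g_n$ simultaneously carries: a uniform $d$-Lipschitz bound, which yields the separation $a<b$, and the fact that $g_n$ is a $k_\Omega$-isometry, which forces the limiting parameter interval to be unbounded. Both claims are statements about the behaviour of $b_n-a_n=|b_n-a_n|$ as $n\to\infty$, so in each case I would pin down this quantity and pass to the (possibly infinite) limit.

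For the separation $a<b$, I would start from the Lipschitz estimate recorded just above, namely $d(g_n(a_n),g_n(b_n))\le M|a_n-b_n|=M(b_n-a_n)$, where the last equality uses $a_n\le 0\le b_n$. Since $g_n(a_n)=x_n\stackrel{d}{\to}p$ and $g_n(b_n)=y_n\stackrel{d}{\to}q$ with $p\neq q$, continuity of $d$ on $\overline{\Omega}^d$ gives that the left-hand side converges to $d(p,q)>0$. Passing to the limit (reading the bound as $d(p,q)\le M(b-a)$, with the convention that the right-hand side is $+\infty$ when $b-a=+\infty$) yields $b-a\ge d(p,q)/M>0$, hence $a<b$. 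The only point to watch is that the endpoint convergence must be taken in the metric $d$, using the already-established limits $x_n\to p$ and $y_n\to q$, rather than the uniform-on-compacta convergence of $g_n$ to $g$, which provides no information at the endpoints.

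For the second assertion I would instead exploit that, by Remark \ref{imp-rem}, each $g_n\colon[a_n,b_n]\to(\Omega,k_\Omega)$ is an isometry, so that
\[
b_n-a_n=k_\Omega\big(g_n(b_n),g_n(a_n)\big)=k_\Omega(x_n,y_n).
\]
Because $x_n\to p$ and $y_n\to q$ converge to two distinct points of $\partial_d\Omega$, Observation \ref{observation} gives $k_\Omega(x_n,y_n)\to\infty$, and therefore $b-a=\lim_{n\to\infty}(b_n-a_n)=+\infty$. In particular $a$ and $b$ cannot both be finite.

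I do not anticipate a genuine obstacle: the entire content lies in correctly using the dual role of $g_n$ as a $d$-Lipschitz curve (giving a finite, positive lower bound) and as a $k_\Omega$-geodesic (giving an infinite length). The one piece of care is bookkeeping with the limits $a\in[-\infty,0]$ and $b\in[0,\infty]$, but since the relevant quantity $b_n-a_n\in[0,\infty]$ and both inequalities are stable under passage to the limit, no case analysis beyond the $+\infty$ convention is required. I note that the second assertion already implies the first; nevertheless it is natural to isolate the first, both because its argument stays within the metric $d$ and because it records the quantitative lower bound $b-a\ge d(p,q)/M$.
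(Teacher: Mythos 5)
Your proof is correct and follows essentially the same route as the paper: part (i) via the uniform $d$-Lipschitz bound $d(g_n(a_n),g_n(b_n))\le M|a_n-b_n|$ together with $d(p,q)>0$, and part (ii) via the fact that each $g_n$ is a $k_\Omega$-isometry combined with Observation \ref{observation}. Your added remarks (the explicit lower bound $b-a\ge d(p,q)/M$ and the note that (ii) subsumes (i)) are accurate refinements of what the paper leaves implicit.
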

	
	\noindent\textbf{Claim 1.} $g:(a,b) \rightarrow \overline{\Omega}^{d}$ is a constant map: In view of \eqref{**}, we have that $\delta_{\Omega}(g_n(t))$ converges uniformly to $0$ on compact subsets of $(a,b)$. Then for $s<t$ in $(a,b)$ we have
	\begin{equation}
		d(g(s),g(t)) = \lim_{n \rightarrow \infty} d(g_{n}(s),g_{n}(t))\le \lim\limits_{n\to \infty}M_n|s-t|=0.
	\end{equation}
	Thus $g$ is constant. 
	
	Further, using the quasihyperbolic growth in $\Omega$, we will establish a contradiction by proving the following.
	
	\noindent\textbf{Claim 2.} $g :(a,b) \rightarrow \overline{\Omega}^{d}$ is not a constant map: Since each $g_n:[a_n,b_n]\to(\Omega,k_\Omega)$ is an isometry, for $t\in[a_n,b_n]$ we have
	\begin{eqnarray*}
		|t|=k_{\Omega} (g_{n}(0),g_{n}(t)) &\leq& k_{\Omega} (x_{0},g_{n}(0)) + k_{\Omega} (x_{0},g_{n}(t))\\
		&\leq& \phi\left(\frac{\delta_{\Omega}(x_0)}{\delta_{\Omega}(g_n(0))}\right)+\phi\left(\frac{\delta_{\Omega}(x_0)}{\delta_{\Omega}(g_n(t))}\right)\\
		&\leq& 2\phi\left(\frac{\delta_{\Omega}(x_0)}{\delta_{\Omega}(g_n(t))}\right)\,\,\, (\mbox{by }\eqref{***}).
	\end{eqnarray*}
	In view of Observation \ref{O}, we have only the following two cases to consider:
	\begin{itemize}
		\item[Case 1:] Both $a$ and $b$ are infinite.
		\item[Case 2:] Exactly one of $a$ and $b$ is finite.
	\end{itemize}
	
	Let us first consider Case 1 and assume that $a=-\infty$ and $b=\infty$. Since $b_n\to \infty$, $a_n\to -\infty$, and $\phi$ is strictly increasing with $\phi(t)\to\infty$ as $t\to\infty$, there exist $n_0\in\mathbb{N}$ and constants $A,B>0$ such that:
	\begin{itemize}
		\item[(i)] whenever $t\in (B,b_n]$ and $n\ge n_0$, we have
		$$\frac{|t|}{2}\in \mbox{\textbf{range}}(\phi), \ \text{and hence}\  \delta_{\Omega}(g_n(t))\le \frac{\delta_{\Omega}(x_0)}{\phi^{-1}\left(\frac{|t|}{2}\right)}\,.$$
		\item[(ii)] whenever $t\in [a_n,-A)$ and $n\ge n_0$, we have
		$$\frac{|t|}{2}\in \mbox{\textbf{range}}(\phi), \ \text{and hence}\  \delta_{\Omega}(g_n(t))\le \frac{\delta_{\Omega}(x_0)}{\phi^{-1}\left(\frac{|t|}{2}\right)}\,.$$
	\end{itemize}
	
	Choose $a^*\in (-\infty,-A)$ and $b^*\in (B,\infty)$ such that
	\begin{equation}\label{2}
		d(p,q) >\int_{b^*}^{+\infty} \frac{\delta_{\Omega}(x_0)}{\phi^{-1}\left(\frac{t}{2}\right)}\,dt+\int_{-\infty}^{a^*} \frac{\delta_{\Omega}(x_0)}{\phi^{-1}\left(\frac{|t|}{2}\right)}\,dt.
	\end{equation}
	Therefore,
	\begin{eqnarray*}
		d(g(b^*), g(a^*))&=&\lim_{n\to \infty}d(g_{n}(b^*), g_n(a^*))\\
		&\geq& \limsup_{n \rightarrow \infty} \Big[d(g_{n}(b_{n}),g_{n}(a_{n})) - d(g_{n}(b_{n}),g_{n}(b^*)) - d(g_{n}(a_{n}),g_{n}(a^*))\Big]\\
		&\geq& d(p,q) - \limsup_{n\rightarrow \infty}\int_{b^*}^{b_{n}}|g_{n}'|(t)\,dt - \limsup_{n \rightarrow \infty}\int_{a_n}^{a^*} |g_{n}'|(t)\,dt\\
		&\ge&d(p,q)-\limsup_{n\to \infty}\int_{b^*}^{b_n}\delta_{\Omega}(g_n(t))\,dt-\limsup_{n\to \infty}\int_{a_n}^{a^*}\delta_{\Omega}(g_n(t))\,dt\\
		&\ge&d(p,q)-\limsup_{n \rightarrow \infty}\int_{b^*}^{b_n} \frac{\delta_{\Omega}(x_0)}{\phi^{-1}\left(\frac{t}{2}\right)}\,dt-\limsup_{n \rightarrow \infty}\int_{a_n}^{a^*} \frac{\delta_{\Omega}(x_0)}{\phi^{-1}\left(\frac{|t|}{2}\right)}\,dt\\
		&\ge&	d(p,q) -\int_{b^*}^{+\infty} \frac{\delta_{\Omega}(x_0)}{\phi^{-1}\left(\frac{t}{2}\right)}\,dt-\int_{-\infty}^{a^*} \frac{\delta_{\Omega}(x_0)}{\phi^{-1}\left(\frac{|t|}{2}\right)}\,dt>0 \quad (\mbox{by }\eqref{2}).
	\end{eqnarray*}
	Thus, $g$ is non-constant. For Case 2, following the same steps as in Case 1, we conclude that $g$ is non-constant.
	
	Combining both cases, we find that $g$ is not constant, which contradicts Claim 1. This yields the existence of the compact set $K$. Therefore, $(\Omega,d)$ is quasihyperbolically visible. This concludes the proof of Theorem \ref{Main-2}.
\end{proof}
	
	\subsection*{Acknowledgements}
	We express our gratitude to Professor Pekka Koskela for introducing us the paper \cite{Lammi-2011} in relation to Problem \ref{GB-MB} during the research visit of the second named author at the department of mathematics and statistics, Jyv\"askyl\"a university, Finland. The second named author thanks Prime Minister's Research Fellowship (Id: 1200297), Govt. of India for their support. We thank the anonymous referee for a careful reading of the manuscript and for many helpful comments and suggestions that substantially improved the exposition and corrected several inaccuracies.


\begin{thebibliography}{99}
		
		
		
		\bibitem{Allu-2023}
		{\sc V. Allu} and {\sc A. Pandey}, Visible quasihyperbolic geodesics, {\it arXiv:2306.03815v3}.
		
		\bibitem{Ambrosio-1990}
		{\sc L. Ambrosio}, Metric space valued functions of bounded variation, {\it Ann. Sc. Norm. Sup. Pisa}, {\bf 17} (1990), 439--478.
		
		\bibitem{Book}
		{\sc L. Ambrosio}, {\sc N. Gigli}, and {\sc G. Savar\'e},
		{\it Gradient Flows in Metric Spaces and in the Space of Probability Measures},
		Lectures in Mathematics ETH Z\"urich, Birkh\"auser, Basel, 2008.
		
		
		\bibitem{Ambrosio-2000}
		{\sc L. Ambrosio} and {\sc B. Kirchheim}, Rectifiable sets in metric and Banach
		spaces, {\it Math. Ann.} {\bf 318} (2000), 527--555.
		
		\bibitem{Balogh-2002}
		{\sc Z. M. Balogh} and {\sc S. M. Buckley}, Geometric characterizations of Gromov hyperbolicity, {\it Invt. Math.} {\bf 153} (2003), 261--301.
		
			\bibitem{Bharali-2021}
		{\sc G. Bharali} and {\sc A. Maitra},
		A weak notion of visibility, a family of examples, and Wolff--Denjoy theorems,
		{\it Ann. Sc. Norm. Super. Pisa Cl. Sci.} {\bf 22} (2021), 195--240.
		
		\bibitem{Bharali-2017}
		{\sc G. Bharali} and {\sc A. Zimmer}, Goldilocks domains, a weak notion of visibility, and applications, {\it Adv. Math.} {\bf 310} (2017), 377--425.
		
		
		
		\bibitem{BHK}
		{\sc M. Bonk}, {\sc J. Heinonen} and {\sc P. Koskela},
		Uniformizing Gromov hyperbolic spaces,
		{\it Ast\'erisque} {\bf 270} (2001), 107 pp.
		
		\bibitem{Bracci-2022}
		{\sc F. Bracci}, {\sc Nikolai Nikolov} and {\sc P. J. Thomas}, Visibility of Kobayashi geodesics in convex domains and related properties, {\it Math. Z.} {\bf 301} (2022), 2011--2035.
		
		\bibitem{Bridson-book}
		{\sc M. R. Bridson} and {\sc A. Haefliger},
		{\it Metric Spaces of Non-Positive Curvature},
		Grundlehren der mathematischen Wissenschaften, vol.~319, Springer-Verlag, Berlin, 1999.
		
		
		\bibitem{O'Neill-1973}
		{\sc P. Eberlein} and {\sc B. O'Neill},
		Visibility manifolds,
		{\it Pacific J. Math.} {\bf 46} (1973), 45--109.
		
		
		\bibitem{Gehring-1985}
		{\sc F. W. Gehring} and {\sc O. Martio}, Lipschitz classes and quasiconformal mappings, {\it Ann. Acad. Sci. Fenn. Ser. A I Math.} {\bf 10} (1985), 203--219. 
		
		\bibitem{Gehring-1976}
		{\sc F. W. Gehring} and {\sc B. P. Palka}, Quasiconformally Homogeneous Domains, {\it J. Anal. Math.} {\bf 30} (1976), 172--199.
		
		\bibitem{Gehring-1979}
		{\sc F. W. Gehring} and {\sc B. G. Osgood}, Uniform domains and the Quasi-Hyperbolic Metric, {\it J. Anal. Math.} {\bf 36} (1979), 50--74.
		
		
		
		
		\bibitem{Harpe}
		{\sc \'E. Ghys} and {\sc P. De La Harpe}, Sur les groupes hyperboliques d'apres Mikhael Gromov, Progress in Mathematics {\bf 83}, {\it Birkh\"auser}, Boston, 1990.

        \bibitem{HSX-2008}
		{\sc D. A. Herron}, {\sc N. Shanmugalingam} and {\sc X. Xie},
		Uniformity from Gromov hyperbolicity,
		{\it Illinois J. Math.} {\bf 52} (2008), 1065--1109.
		
		\bibitem{Koskela-2005}
		{\sc S. Hencl} and {\sc P. Koskela}, Quasihyperbolic boundary conditions and capacity: uniform continuity of quasiconformal mappings, {\it J. Anal. Math.} {\bf 96} (2005), 19--35.
		
		
		
		\bibitem{Koskela-2013}
		{\sc P. Koskela}, {\sc P. Lammi}, Gehring-Hayman theorem for conformal deformations, {\it Comment. Math. Helv.} {\bf 88} (2013), 185--203.
		
	\bibitem{Koskela-2014}
	{\sc P. Koskela}, {\sc P. Lammi} and {\sc V. Manojlovi\'c},
	Gromov hyperbolicity and quasihyperbolic geodesics,
	{\it Ann. Sci. \'Ec. Norm. Sup\'er.} {\bf 47} (2014), 975--990.
		
		\bibitem{Koskela-2001}
		{\sc P. Koskela}, {\sc J. Onninen} and {\sc J. T. Tyson}, Quasihyperbolic boundary conditions and capacity: H\"older continuity of quasiconformal mappings, {\it Comment. Math. Helv.} {\bf 76} (2001), 416--435.
		
		\bibitem{Koskela-2002}
		{\sc P. Koskela}, {\sc J. Onninen} and {\sc J. T. Tyson}, Quasihyperbolic boundary conditions and Poincar\'e domains, {\it Math. Ann.} {\bf 323} (2002), 811--830.
		
		
		
		\bibitem{Lammi-2011}
		{\sc P. Lammi}, Quasihyperbolic boundary condition: compactness of the inner boundary, {\it Illinois J. Math.} {\bf 55} (2011), 1221--1233.
		
		
		\bibitem{Lammi-Thesis}
		{\sc P. Lammi}, Homeomorphic equivalence of Gromov and internal boundaries [Doctoral dissertation], {\it Jyv\"askyl\"an yliopisto Report/University of Jyv\"askyl\"a, Department of Mathematics and Statistics}, 128, 2011. http://urn.fi/URN:ISBN:978-951-39-8851-7.
		
		\bibitem{Vaisala-GH-1}
		{\sc J. V\"ais\"al\"a}, Hyperbolic and Uniform domains in Banach spaces, {\it Ann. Acad. Sci. Fenn.} \textbf{30} (2005), 261--302.
		
		\bibitem{Zhou-2022}
		{\sc Q. Zhou}, {\sc Y. Li} and {\sc X. Li}, Sphericalizations and applications in Gromov hyperbolic spaces, {\it J. Math. Anal. Appl.} {\bf 509}, (2022) 125948.
		
		\bibitem{Rasila-2021}
		{\sc Q. Zhou}, {\sc L. Li}, and {\sc A. Rasila}, Generalized John Gromov hyperbolic domains and extensions of maps, {\it Math. Scand.} {\bf 127} (2021), 10.7146/math.scand.a-128968.
		
		
		
		
		
	\end{thebibliography}
\end{document}